\newtheorem{theorem}{Theorem}[section]
\newtheorem{lemma}[theorem]{Lemma}
\newtheorem{corollary}[theorem]{Corollary}
\newtheorem{definition}[theorem]{Definition}
\newtheorem{proposition}[theorem]{Proposition}
\newtheorem{remark}[theorem]{Remark}
\numberwithin{equation}{section} 
\newcommand{\norm}[1]{\left\|#1\right\|}
\newcommand{\abs}[1]{\left|#1\right|}
\newcommand*{\R}{\ensuremath{\mathbb{R}}}
\newcommand*{\N}{\ensuremath{\mathbb{N}}}
\newcommand{\eps}{\varepsilon}
\newcommand*{\tr}{\ensuremath{\mathrm{tr\,}}}
\newcommand{\e}{\varepsilon}
\newcommand{\quotes}[1]{``#1''}
\renewcommand{\phi}{\varphi}
\renewcommand{\MR}[1]{} 
\def\dist{\mathop{\rm dist}\nolimits}    
\def\div{\mathop{\rm div}\nolimits}    
\def\spt{\mathop{\rm Spt}\nolimits} 
\def\tr{\mathop{\rm Tr}\nolimits}
\newcommand{\be}{\begin{equation}}
\newcommand{\ee}{\end{equation}}
\title{Dissipation for codimension 1 singular structures in the incompressible Euler equations}
\author[L. De Rosa]{Luigi De Rosa}
\address[L. De Rosa]{Gran Sasso Science Institute, viale Francesco Crispi, 7, 67100 L’Aquila, Italy}
\email{luigi.derosa@gssi.it}
\author[M. Inversi]{Marco Inversi}
\address[M. Inversi]{Departement Mathematik und Informatik, Universit\"at Basel, CH-4051 Basel, Switzerland}
\email{marco.inversi@unibas.ch}
\author[M. Nesi]{Matteo Nesi}
\address[M. Nesi]{Departement Mathematik und Informatik, Universit\"at Basel, CH-4051 Basel, Switzerland}
\email{matteo.nesi@unibas.ch}
\date{\today}
\subjclass[2020]{35Q31 - 35D30 - 26A45 - 28A75.}
\keywords{Incompressible Euler - Dissipation measure - Onsager's conjecture - Normal traces.}
\thanks{\textit{Acknowledgments}.
 The authors acknowledge the support of the SNF grant FLUTURA: Fluids, Turbulence, Advection No. 212573}
\begin{document}

\begin{abstract}
We consider weak solutions to the incompressible Euler equations. It is shown that energy conservation holds in any Onsager critical class in which smooth functions are dense. The argument is independent of the specific critical regularity and the underlying PDE. This groups several energy conservation results  and it suggests that critical spaces where smooth functions are dense are not at all different from subcritical ones, although possessing the \quotes{minimal} regularity index. Then, we study properties of the dissipation $D$ in the case of bounded solutions that are allowed to jump on $\mathcal H^d$-rectifiable 
space-time sets $\Sigma$, which are the natural dissipative regions in the compressible setting. As soon as both the velocity and the pressure posses traces on $\Sigma$, it is shown that  $\Sigma$ is $D$-negligible. The argument makes the role of the incompressibility very apparent, and it prevents dissipation on codimension 1 sets even if they happen to be densely distributed. As a corollary, we deduce energy conservation for bounded solutions of \quotes{special bounded deformation}, providing the first energy conservation criterion in a critical class where only an assumption on the \quotes{longitudinal} increment is made, while the energy flux does not vanish for kinematic reasons.
\end{abstract}

\maketitle

\section{Introduction}

Let $\Omega\subset \R^d$ be any open set. We consider the incompressible Euler equations
\begin{equation}\label{E}
\left\{\begin{array}{l}
\partial_t u +\div (u \otimes u) +\nabla p=f \\
\div u=0
\end{array}\right. \qquad \text{in } \Omega \times (0,T).
\end{equation}
 The initial datum and the boundary condition are not specified since our analysis will be focused on the interior of $\Omega \times (0,T)$. In particular, all the regularity assumptions used in this note should be intended to be local, i.e. in any compact subset of $\Omega\times (0,T)$.

Following the Onsager's deterministic ideal  picture \cite{O49}, the study of turbulent flows and their link with the emergence of anomalous dissipation phenomena is subject to the analysis of weak solutions to \eqref{E} which fail to conserve the kinetic energy. After Eyink and Constantin--E--Titi \cites{E94,CET94}, a clear mathematical framework has been settled in \cite{DR00}. For any $u\in L^3_{x,t}$ and $f\in L^{\sfrac{3}{2}}_{x,t}$ it holds\footnote{More generally, given $u$ in a certain class, any assumption on the external force which makes  $f\cdot u$ well-defined suffices.}
\begin{equation}
    \label{local_energy_eq}
\partial_t \frac{\abs{u}^2}{2} +\div  \left(\left(\frac{\abs{u}^2}{2}+p\right)u\right)=-D + f\cdot u \qquad \text{in } \mathcal D'_{x,t},
\end{equation}
where $D=\lim_{\ell\rightarrow 0} D^\ell_{\rm DR}$ in the sense of distributions, with\footnote{Here we are tacitly assuming that $x$ is picked in any compact subset of $\Omega$ and $\ell$ is sufficiently small so that $x+\ell z\in \Omega$.}
\begin{equation}\label{approx_of_D}
D^\ell_{\rm{DR}}:=  \int \nabla \rho (z) \cdot \frac{\delta_{\ell z} u(x,t)}{4\ell}   \abs{ \delta_{\ell z} u(x,t) }^2 dz, \qquad \delta_{\ell z} u (x,t):=u(x+\ell z,t)-u(x,t).
\end{equation}
Here $\rho$ is any smooth, even, compactly supported kernel but nonetheless the limit $D$ is independent of it. Different approximations of $D$ are available, giving slightly different insights on the dissipation. Indeed, following the original Constantin--E--Titi regularization, we also have $D=\lim_{\ell\rightarrow 0} D^\ell_{\rm CET}$, with
\begin{equation}\label{approx_of_D_cet}
D^\ell_{\rm{CET}}:=  R_\ell:\nabla u_\ell, \qquad R_\ell:=u_\ell\otimes u_\ell - (u\otimes u)_\ell,
\end{equation}
where $u_\ell$ is the space regularization of $u$ at scale $\ell$. Moreover, whenever $u$ arises as a strong $L^3_{x,t}$ limit of a vanishing viscosity sequence of suitable weak solutions to Navier--Stokes, then $D$ is a non-negative Radon measure (see \cite{DR00}). We point out that \eqref{approx_of_D} (or modifications of it) provides a deterministic counterpart of longitudinal, absolute or mixed third order \emph{structure functions} \`a la Kolmogorov. We refer to the recent work \cite{Novack24} where this has been proved under minimal assumptions. 

\begin{remark}
    \label{R:approx_are_trilin_operator}
Denote by $u(t):=u(\cdot,t)$. It is clear that both  sequences $D^\ell_{\rm DR}$ and $D^\ell_{\rm CET}$ from \eqref{approx_of_D} and \eqref{approx_of_D_cet} can be written as trilinear operators applied to the vector field $u$, that is  
\begin{equation}\label{T_DR}
D^\ell_{\rm DR}=T_{\rm DR}^\ell [u(t),u(t),u(t)] \quad \text{with} \quad T_{\rm DR}^\ell [v_1,v_2,v_3]:= \int \nabla \rho (z) \cdot \frac{\delta_{\ell z} v_1(x)}{4\ell}    \delta_{\ell z} v_2(x) \cdot \delta_{\ell z} v_3(x)  \,dz, 
\end{equation}
$$ D^\ell_{\rm CET}=T_{\rm CET}^\ell [u(t),u(t),u(t)] \quad \text{with} \quad T_{\rm CET}^\ell [v_1,v_2,v_3]:= ((v_1)_\ell\otimes (v_2)_\ell - (v_1\otimes v_2)_\ell):\nabla (v_3)_\ell. $$
\end{remark}

A direct consequence of \eqref{approx_of_D} is that $\norm{D^\ell_{\rm DR}}_{L^1_{x,t}}\rightarrow 0$ as soon as $u \in L^3_t B^{\theta}_{3,\infty}$ with $\theta>\sfrac{1}{3}$, proving local energy balance for solutions in any \quotes{Onsager subcritical} class. On the other hand, for $\theta<\sfrac{1}{3}$, the convex integration methods \cites{DS13,Is18,BDSV19,GR24} give H\"older continuous solutions with non-trivial dissipation $D$. Previous constructions at lower regularity were obtained in \cites{Shn00,Sch93}. Subsequent improvements  \cites{Is22,DK22} also provided H\"older continuous solutions with $D>0$, which eventually have been extended to the whole Onsager supercritical range in the $L^3$-based framework \cites{GKN24_1,GKN24_2} by Giri--Kwon--Novack. Although these results mathematically validate the Onsager prediction, the critical case $\theta=\sfrac{1}{3}$ remains open. In this direction, energy conservation is known if the solution, in addition to posses exact $\sfrac{1}{3}$ differentiability, has suitable decay of Littlewood–Paley pieces \cite{CCFS08} (i.e. $u\in L^3_tB^{\sfrac{1}{3}}_{3,c_0}$) or if an averaged Besov seminorm vanishes at small scales \cites{FW18,BGSTW19} (i.e. $u\in L^3_tB^{\sfrac{1}{3}}_{3, VMO}$). Moreover, by looking at the Constantin--E--Titi approximation \eqref{approx_of_D_cet}, it is apparent that energy conservation follows if $u\in L^\infty_t C^0_x$ with\footnote{Here we are using that $R_\ell : \nabla u_\ell= R_\ell: E u_\ell$ since $R_\ell$ is a symmetric matrix.} $E u :=\frac{\nabla u + \nabla u^T}{2} \in L^1_{x,t}$ or if $u\in L^\infty_t L^2_x$ with $Eu\in L^1_t L^\infty_x$. The reader may recognize that the latter condition guarantees weak strong uniqueness \cites{BDS11,DIS23,W18}. We want to investigate the common feature of those classes of solutions. A quick inspection shows that any of the above assumptions guarantees a uniform bound in $L^1_{x,t}$ on the approximating sequence of $D$ and, moreover, smooth functions are dense in at least one of the spaces involved (with respect to the spatial variable). This simple general principle is also sufficient. 

\begin{theorem}\label{T:main_smooth_dense}
Let $u\in L^3_{x,t}$ be any vector field and let $D^\ell_{\bullet}$ be any of the two sequences \eqref{approx_of_D}, \eqref{approx_of_D_cet}. Let $T^\ell_{\bullet}$ be the corresponding trilinear operator defined in \cref{R:approx_are_trilin_operator}. Let $C^\infty_x\subset X_1,X_2,X_3\subset L^1_x$ be three Banach spaces such that 
\begin{equation}
    \label{uniform_bound_T}
\sup_{\ell>0}\norm{T_{\bullet}^\ell[v_1,v_2,v_3]}_{L^1_x}\leq C \prod_{i=1}^3\norm{v_i}_{X_i}\qquad \forall v_i\in X_i, \, \text{ for } \, i=1,2,3.
\end{equation}
Moreover, assume that 
\begin{equation}\label{T_vanish_smooth}
\lim_{\ell\rightarrow 0} \norm{T_{\bullet}^\ell[v_1,v_2,v_3]}_{L^1_x}=0 \qquad \text{if } \quad v_i\in X_i,\, i=1,2,3 \quad \text{ and } \quad \exists i_0 \, \text{ s.t. } \, v_{i_0}\in C^\infty_x.
 \end{equation}
Then $D^\ell_{\bullet} \rightarrow 0$ in $L^1_{x,t}$ as soon as $u\in L^{p_1}_t X_1\cap L^{p_2}_tX_2\cap L_t^{p_3}X_3$ with  $\sfrac{1}{p_1}+ \sfrac{1}{p_2}+\sfrac{1}{p_3}=1$ and $C^\infty_x$ is dense in at least one of the  $X_i$, $i=1,2,3$.
\end{theorem}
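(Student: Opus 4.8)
The statement is a soft functional-analytic fact: a trilinear operator that is uniformly bounded on a product of Banach spaces and that vanishes as $\ell\to0$ whenever one argument is smooth will vanish on the diagonal whenever the common argument lies in the (time-integrated) intersection of the three spaces. The plan is to prove this in two stages: first a fixed-time (purely spatial) statement, then an integration in time using Hölder.

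**Step 1: the spatial statement.** Fix $v\in X_1\cap X_2\cap X_3$ and let me show $\norm{T^\ell_\bullet[v,v,v]}_{L^1_x}\to0$. Without loss of generality $C^\infty_x$ is dense in $X_1$. Fix $\eps>0$ and pick $w\in C^\infty_x$ with $\norm{v-w}_{X_1}<\eps$. Decompose by multilinearity:
\begin{equation*}
T^\ell_\bullet[v,v,v] = T^\ell_\bullet[w,v,v] + T^\ell_\bullet[v-w,v,v].
\end{equation*}
The first term tends to $0$ in $L^1_x$ by \eqref{T_vanish_smooth} since $w\in C^\infty_x$. For the second term, \eqref{uniform_bound_T} gives $\norm{T^\ell_\bullet[v-w,v,v]}_{L^1_x}\le C\norm{v-w}_{X_1}\norm{v}_{X_2}\norm{v}_{X_3}\le C\eps\,\norm{v}_{X_2}\norm{v}_{X_3}$, uniformly in $\ell$. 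Hence $\limsup_{\ell\to0}\norm{T^\ell_\bullet[v,v,v]}_{L^1_x}\le C\eps\,\norm{v}_{X_2}\norm{v}_{X_3}$, and letting $\eps\to0$ finishes the fixed-time claim. Here one should note that $v\in X_1\cap X_2\cap X_3$ is automatically in $L^1_x$, so the expressions make sense; also that $C^\infty_x$ need only be dense in one $X_i$, and the argument is symmetric in which slot carries the smooth approximant provided we place $v-w$ in the corresponding slot of the bound \eqref{uniform_bound_T}.

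**Step 2: integrate in time.** Now take $u\in L^{p_1}_tX_1\cap L^{p_2}_tX_2\cap L^{p_3}_tX_3$ with $\tfrac1{p_1}+\tfrac1{p_2}+\tfrac1{p_3}=1$. For a.e.\ $t$, $u(t)\in X_1\cap X_2\cap X_3$, so by Step 1 the integrand $g_\ell(t):=\norm{D^\ell_\bullet(\cdot,t)}_{L^1_x}=\norm{T^\ell_\bullet[u(t),u(t),u(t)]}_{L^1_x}\to0$ as $\ell\to0$ for a.e.\ $t$. To pass to $L^1_{x,t}$ convergence, i.e.\ $\int g_\ell(t)\,dt\to0$, I invoke dominated convergence: by \eqref{uniform_bound_T} and the generalized Hölder inequality in time,
\begin{equation*}
g_\ell(t)\le C\,\norm{u(t)}_{X_1}\norm{u(t)}_{X_2}\norm{u(t)}_{X_3}=:G(t),\qquad \int G(t)\,dt\le C\prod_{i=1}^3\norm{u}_{L^{p_i}_tX_i}<\infty,
\end{equation*}
so $G\in L^1_t$ is the required dominating function, and $\int g_\ell\,dt\to0$. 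This gives $D^\ell_\bullet\to0$ in $L^1_{x,t}$.

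**Main obstacle.** There is no deep obstacle; this is essentially a two-line argument once the structure is seen. The only points requiring care are bookkeeping ones: verifying that the trilinear operators $T^\ell_{\rm DR}$, $T^\ell_{\rm CET}$ of \cref{R:approx_are_trilin_operator} are genuinely well-defined and multilinear on the relevant spaces (so that the decomposition in Step 1 is legitimate), and confirming that the hypotheses \eqref{uniform_bound_T}–\eqref{T_vanish_smooth} are indeed what gets used rather than something stronger — in particular that density in a single $X_i$ suffices, which is exactly why the decomposition is taken with the smooth function in the slot where density holds. One should also remark that the measurability of $t\mapsto g_\ell(t)$ and of $G$ follows from the measurability of $u$ and continuity/measurability of the $X_i$-norms along $u$, which is standard. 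The mild subtlety worth a sentence in the write-up is that $\eps$ (equivalently $w$) may be chosen independently of $\ell$, which is what decouples the two limits.
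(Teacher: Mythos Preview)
Your proof is correct and follows essentially the same approach as the paper: the paper packages your Step~1 as an abstract lemma about uniformly bounded multilinear operators (freezing all but one entry and using density in that slot), and then applies dominated convergence exactly as in your Step~2. The only cosmetic difference is that the paper isolates the density argument as a separate statement for reuse in other settings, while you carry it out directly on $T^\ell_\bullet[v,v,v]$.
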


The assumption \eqref{uniform_bound_T} means that we are considering spaces $X_i$ which are \quotes{at least} critical, while \eqref{T_vanish_smooth} is the natural requirement that the energy flux strongly vanishes if, in addition, any of the entries is smooth.
The above theorem follows from the simple observation that the trilinearity of the energy flux, together with the uniform bound \eqref{uniform_bound_T}, implies strong convergence to zero if $u$ can be approximated with more regular functions in any of the spatial norms involved. 
Indeed, these approximations (or say, \quotes{commutators}) of the defect distribution $D$ arise to point out the critical threshold but measuring it only at small scales. Thus, it is clear that imposing density of $C^\infty_x$ completely kills the mechanism that could allow for a non-trivial energy defect $D$. Let us emphasize that in the theorem above we are not claiming much originality with respect to the usual arguments, and the only goal is to single out the common mechanism of several results which otherwise could appear as different (see \cref{C: en cons euler general}). For instance, it does not even matter what is the additional property that functions (or their norm) in  $\overline{C^\infty}^{X_i}$ have with respect to the ones in $X_i$. The only message that \cref{T:main_smooth_dense} should give is that, given $u$ in a class which makes the energy flux bounded but non-vanishing, then $D\equiv 0$ in the subspace where smooth functions are dense without any additional effort and, perhaps more importantly, forgetting about the PDE from which $D$ arose. To the best of our knowledge, the statements $(ii), (vi)$ of \cref{C: en cons euler general} are new.

\cref{T:main_smooth_dense} considers only the case of trilinear structures, while in \cref{S:smooth_dense} the slightly more general setting of \quotes{multilinear operators} will be considered. These will apply, for instance, to the compressible \cites{FGSW17} and inhomogeneous \cites{LS16,NNT20} Euler setting, but also to the transport equations \cites{Ambr04,DipLi89} with rough vector fields. In the latter case our general approach recovers the renormalization property for $BD$ vector fields whenever the advected scalar is continuous (see \cref{S:transport_renorm}). Sticking to the context of the transport equations, density of smooth functions is indeed the conceptual difference between the DiPerna--Lions approach \cite{DipLi89} for Sobolev vector fields and the Ambrosio's one \cite{Ambr04} in $BV$. In the latter case, for merely bounded densities, $C^\infty_x$ fails to be dense in both $BV_x$ and $L^\infty_x$. The elegant idea of Ambrosio is to note that the $BV_x$ assumption on the vector field allows to optimize in the regularization kernel appearing in the defect distribution, from which the renormalization (or in our context, energy conservation) follows. This idea has been used in \cite{DRINV23} to prove energy conservation in the context of the incompressible Euler system for solutions $u\in L^\infty_{x,t}\cap L^1_tBV_x$. In that class, it is proved that $D\equiv 0$ although the strong $L^1_{x,t}$ convergence  $D_\bullet^\ell\rightarrow 0$ may fail , at least for a fixed choice of $\rho$. It must be noted that a careful space-time inhomogeneous choice of the kernel $\rho$ along the sequence $\ell\rightarrow 0$ allows a suitable refinement of $D^\ell_{\rm DR}$ to strongly vanish in $L^1_{x,t}$ (see \cite{Bonicatto}*{Theorem 2.4}). In any case, the incompressiblity\footnote{More generally, $\div u$ must be a measure absolutely continuous with respect to Lebesgue \cite{Ambr04}.} constraint plays a fundamental role, coherently with the non-trivial energy dissipation concentrating on codimension $1$ sets in the context of compressible shocks. 

The above discussion drives us towards the study of  codimension $1$ singular structures and their link to the dissipation measure $D$ in the incompressible setting. In this direction,  the first result has been obtained by Shvydkoy \cite{Shv09} where energy conservation for Vortex-Sheets (but not only) type solutions has been obtained. Vortex-Sheets are weak solutions to \eqref{E} with the velocity that experiences a jump in the tangential component across a (regular enough) hypersurface in space. The sheet is required to evolve regularly in time. Remarkably, in \cite{Shv09} the energy conservation is shown by looking at the left-hand side in \eqref{local_energy_eq}, while kinematic arguments fail. To the best of our knowledge, this has been the first instance where energy conservation is proved by relying on the structure of the PDE and not on the form of the energy flux. Towards a better understanding of these mechanisms, we propose the following result.

\begin{theorem}\label{T:main_traces}
Let $u,p\in L^\infty_{x,t}$ be a weak solution to \eqref{E} with forcing $f \in L^1_{x,t}$. Assume that both $u$ and $p$ have bilateral traces on Lipschitz space-time hypersurfaces in the sense of \cref{d: inner/outer trace}. Assume also that the Duchon--Robert distribution $D$ in \eqref{local_energy_eq} is a Radon measure. Then, $\abs{D}(\Sigma)=0$ for every countably $\mathcal{H}^d$-rectifiable set $\Sigma\subset \Omega\times (0,T)$.
\end{theorem}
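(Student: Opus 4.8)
The plan is to recast the local energy balance \eqref{local_energy_eq} as a statement about a bounded divergence–measure field. Set
$$V:=\left(\Big(\tfrac{|u|^2}{2}+p\Big)u,\ \tfrac{|u|^2}{2}\right)\in L^\infty\big(\Omega\times(0,T);\R^{d+1}\big),$$
so that \eqref{local_energy_eq} reads $\div_{x,t}V=-D+f\cdot u$ in $\mathcal D'_{x,t}$, and under our hypotheses $\div_{x,t}V$ is a Radon measure. A countably $\mathcal{H}^d$-rectifiable set $\Sigma$ is contained in $N\cup\bigcup_{k\ge1}\Gamma_k$ with $\mathcal{H}^d(N)=0$ and each $\Gamma_k$ a portion of a Lipschitz hypersurface, so by countable subadditivity it suffices to prove $|D|(N)=0$ and $|D|(\Gamma_k)=0$ for every $k$. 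The set $N$ is disposed of at once: it is classical that the distributional divergence of an $L^\infty$ vector field, when it is a measure, is absolutely continuous with respect to $\mathcal{H}^{d}$, so $|\div_{x,t}V|(N)=0$; moreover $f\cdot u\,\rmd x\,\rmd t\ll\mathcal L^{d+1}$ and $\mathcal{H}^d(N)=0$ forces $\mathcal L^{d+1}(N)=0$, whence $|D|(N)\le|\div_{x,t}V|(N)+\int_N|f\cdot u|\,\rmd x\,\rmd t=0$.

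Fix now a Lipschitz hypersurface $\Gamma=\Gamma_k$ with $\mathcal{H}^d$-a.e.\ defined unit normal $n=(n_x,n_t)\in\R^d\times\R$, its two sides oriented consistently. By \cref{d: inner/outer trace}, $u$ and $p$ have bilateral traces $u^\pm,p^\pm\in L^\infty(\Gamma)$; since every polynomial expression in $u,p$ then admits bilateral traces given by the same expression in $u^\pm,p^\pm$, the Gauss–Green formula for bounded divergence–measure fields possessing one-sided traces gives, for any such field $W$ whose divergence does not charge $\Gamma$,
$$\div_{x,t}W\,\llcorner\,\Gamma=\big(W^+-W^-\big)\cdot n\;\mathcal{H}^d\llcorner\Gamma .$$
I would apply this to the PDE itself. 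From incompressibility, written as $\div_{x,t}(u,0)=0$, we get $(u^+-u^-)\cdot n_x=0$ $\mathcal{H}^d$-a.e.\ on $\Gamma$, so the normal velocity $\sigma:=u^+\cdot n_x=u^-\cdot n_x$ is unambiguous. Writing the $j$-th momentum equation as $\div_{x,t}\big(u^ju+p\,e_j,\ u^j\big)=f^j$, with $f^j\in L^1_{x,t}$ not charging $\Gamma$, and inserting the previous identity, we obtain the Rankine–Hugoniot-type relation
$$(\sigma+n_t)\,(u^+-u^-)=-(p^+-p^-)\,n_x\qquad\mathcal{H}^d\text{-a.e. on }\Gamma .$$

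Finally I would apply the Gauss–Green formula to $V$: since $f\cdot u\in L^1_{x,t}$ does not charge $\Gamma$, we get $D\llcorner\Gamma=-\big(V^+-V^-\big)\cdot n\;\mathcal{H}^d\llcorner\Gamma$. Writing $a:=u^+-u^-$, $\bar u:=\tfrac12(u^++u^-)$, $q:=p^+-p^-$, and using $u^\pm\cdot n_x=\sigma$ (hence $\bar u\cdot n_x=\sigma$ and $\tfrac12(|u^+|^2-|u^-|^2)=\bar u\cdot a$), the density simplifies to
$$\big(V^+-V^-\big)\cdot n=\sigma\big(\bar u\cdot a+q\big)+(\bar u\cdot a)\,n_t=(\bar u\cdot a)(\sigma+n_t)+\sigma q .$$
Taking the scalar product of the Rankine–Hugoniot relation with $\bar u$ and using once more $\bar u\cdot n_x=\sigma$ yields $(\sigma+n_t)(\bar u\cdot a)=-\sigma q$, so $(V^+-V^-)\cdot n=0$ $\mathcal{H}^d$-a.e.\ on $\Gamma$. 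Hence $D\llcorner\Gamma=0$, in particular $|D|(\Gamma)=0$; combined with the first paragraph this gives $|D|(\Sigma)=0$.

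I expect the genuine work to be in the second step: justifying the Gauss–Green identity on Lipschitz hypersurfaces for the merely bounded, divergence–measure fields at hand out of the bilateral traces of \cref{d: inner/outer trace}, and keeping the orientations coherent across the three identities (energy, momentum, incompressibility). Everything after that is algebra, and it is precisely the incompressibility constraint — in the guise $u^+\cdot n_x=u^-\cdot n_x$, equivalently $\bar u\cdot n_x=\sigma$ — that forces the exact cancellation $(V^+-V^-)\cdot n=-\sigma q+\sigma q=0$; without it (as for a compressible shock) nothing prevents the energy jump from being nonzero. Note also that the argument never expands the energy flux in velocity increments, so it is insensitive to the kinematic obstructions exploited by convex integration.
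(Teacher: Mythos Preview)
Your proof is correct and follows essentially the same route as the paper: reduce to Lipschitz pieces plus an $\mathcal H^d$-null set, kill the null set by $\div V\ll\mathcal H^d$ for bounded divergence-measure fields, and on each Lipschitz piece combine the jump relations coming from incompressibility, momentum and energy (via \cref{P:div and normal trace}, \cref{t: distributional trace vs lebesgue trace} and \cref{c: composition trace}, which supply exactly the Gauss--Green step you flag). The one noteworthy difference is the final algebra: the paper splits $\Sigma$ into three subsets according to whether $n_x=0$ and whether $u\cdot n_x=0$ and checks \eqref{eq: trace does not jump} case by case, whereas your trick of dotting the Rankine--Hugoniot relation with $\bar u=\tfrac12(u^++u^-)$ to get $(\sigma+n_t)(\bar u\cdot a)=-\sigma q$ collapses all cases into the single cancellation $(V^+-V^-)\cdot n=-\sigma q+\sigma q=0$, which is cleaner.
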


We recall that $\Sigma$ is countably $\mathcal{H}^d$-rectifiable if, up to a $\mathcal H^d$-negligible set, it can be covered by a countable union of Lipschitz graphs (see \cref{S:gmt} below). In particular, although the solution is not assumed to be continuous, the existence of suitable traces prevents the dissipation from happening on codimension $1$ rectifiable sets, even if they are densely distributed. This argument generalizes the mechanism discovered in \cite{Shv09}, from which we have drawn inspiration, giving a more robust proof which applies to rougher singular sets. A key role is played by the incompressibility constraint which, together with the momentum equation, prevents the Bernoulli pressure from jumping where $u$ has a non-trivial normal component. Moreover, the $\mathcal H^d$-negligible set that might be present in $\Sigma$ is shown to be $D$-negligible by \cite{DDI24}*{Theorem 1.2}. In the companion paper \cite{InvVi24} these results will be generalized to the inhomogeneous setting, yet another instance of the robustness of the approach. In order to make the mechanism as clear as possible, we give a quite detailed heuristic in \cref{S:heuristic} below.

To summarize, the incompressibility constraint imposes severe limitations on the structure of dissipative sets. In particular, natural structures where the dissipation mechanism is active in the compressible setting, such as codimension $1$ rectifiable sets where solutions experience jumps, are excluded. That difficulty seems to have been noticed already by J. M. Burgers in his early works proposing the \quotes{Burgers' equation} as a simplified model for turbulent flows. In \cite{Burg48}*{Section XV}, after describing his failure in constructing a solution to the incompressible Navier--Stokes with non-trivial dissipation concentrated on a sheet, he comments:

\begin{changemargin}{1cm}{1cm} 
\begin{center}
\textit{In hydrodynamical turbulence [...] the geometrical features of incompressible fluid flow introduce specific complications into the problems, which, however, stand apart from the basic dynamical relations to which attention has been given here.}
\end{center}
\end{changemargin}

As a corollary of \cref{T:main_traces} we obtain the following result (see \cref{S:gmt} for precise definitions).

\begin{corollary}\label{C:main_SBD}
Let $u,p\in L^\infty_{x,t}$ be a weak solution to \eqref{E} with force $f \in L^1_{x,t}$. Assume that $u \in SBD_{x,t}$ in the sense of \cref{d: SBD} and $p$ has bilateral traces on Lipschitz space-time hypersurfaces in the sense of \cref{d: inner/outer trace}. Then \eqref{local_energy_eq} holds with $D\equiv 0$.
\end{corollary}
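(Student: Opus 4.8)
The plan is to split the dissipation into an ``interior'' part, carried by the absolutely continuous part of the symmetrized spatial derivative of $u$ and annihilated by a soft commutator estimate, and a ``jump'' part, carried by a countably $\H^d$-rectifiable set on which \cref{T:main_traces} applies. Throughout we work with the Constantin--E--Titi approximation \eqref{approx_of_D_cet}, written as $D^\ell_{\rm CET}=R_\ell:\nabla u_\ell=R_\ell:E_x u_\ell$, where $E_x u:=\tfrac12\big(\nabla_x u+(\nabla_x u)^T\big)$ is the \emph{spatial} symmetrized gradient and $R_\ell$ is mollified only in space; this identity is licit since $R_\ell$ is symmetric, and we recall $D=\lim_{\ell\to0}D^\ell_{\rm CET}$ in $\mathcal D'_{x,t}$. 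Since \cref{C:main_SBD} does not assume a priori that $D$ is a measure (unlike \cref{T:main_traces}), establishing this is part of the argument.

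\emph{Structure from $SBD$.} From $u\in SBD_{x,t}$ (\cref{d: SBD}) we record: the space-time jump set $J_u\subset\Omega\times(0,T)$ is countably $\H^d$-rectifiable with locally finite $\H^d$-measure; by the fine-properties and slicing theory for $BD$ functions, for a.e.\ $t$ the slice $u(\cdot,t)$ lies in $SBD_x$, with jump set $J_{u(\cdot,t)}=\{x:(x,t)\in J_u\}$ up to $\H^{d-1}$-null sets, so that the matrix-valued measure $E_x u(\cdot,t)$ carries no Cantor part and
\[
E_x u(\cdot,t)=\mathcal E_x u(\cdot,t)\,\LL^d+\big(u^+(\cdot,t)-u^-(\cdot,t)\big)\otensor\nu_t\,\H^{d-1}\llcorner J_{u(\cdot,t)},\qquad \mathcal E_x u\in L^1_{x,t},
\]
with $\int_0^T\H^{d-1}(J_{u(\cdot,t)})\,dt<\infty$ locally. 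Finally, since $SBD\subset BD$, the one-sided approximate limits of $u$ furnish bilateral traces of $u$ on Lipschitz space-time hypersurfaces in the sense of \cref{d: inner/outer trace}.

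\emph{$D$ is a measure concentrated on $J_u$.} Mollifying the decomposition above in space, $D^\ell_{\rm CET}=R_\ell:(\mathcal E_x u)_\ell+R_\ell:\mu^\ell$ with $\mu^\ell(\cdot,t):=\big((u^+-u^-)\otensor\nu_t\,\H^{d-1}\llcorner J_{u(\cdot,t)}\big)*\rho_\ell$. The first term tends to $0$ in $L^1_{x,t}$: for a.e.\ $t$, $R_\ell(\cdot,t)\to0$ in $L^1_x$ and is bounded in $L^\infty_x$ by $2\norm{u}_{L^\infty_{x,t}}^2$, while $(\mathcal E_x u(\cdot,t))_\ell\to\mathcal E_x u(\cdot,t)$ in $L^1_x$, hence $\norm{R_\ell:(\mathcal E_x u)_\ell}_{L^1_x}\to0$ pointwise in $t$ with an $L^1_t$ majorant, and one concludes by dominated convergence — the usual Constantin--E--Titi commutator estimate, applicable here exactly because there is no Cantor part. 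The second term is only bounded: $\mu^\ell(\cdot,t)$ is supported in the $\ell$-neighbourhood of $J_{u(\cdot,t)}$ with $\norm{\mu^\ell(\cdot,t)}_{L^1_x}\le C\norm{u}_{L^\infty_{x,t}}\H^{d-1}(J_{u(\cdot,t)})$, so $R_\ell:\mu^\ell$ is uniformly bounded in $L^1_{x,t}$ and, for every open $W\subset\subset\Omega\times(0,T)$,
\[
\int_W\abs{R_\ell:\mu^\ell}\,dx\,dt\le C\norm{u}_{L^\infty_{x,t}}^{3}\,\mathfrak m(W_\ell),\qquad \mathfrak m(A):=\int_0^T\H^{d-1}\big(\{x:(x,t)\in A\}\cap J_{u(\cdot,t)}\big)\,dt.
\]
Since $D^\ell_{\rm CET}\to D$ in $\mathcal D'_{x,t}$ while $R_\ell:(\mathcal E_x u)_\ell\to0$ in $L^1_{x,t}$, the uniform $L^1$ bound on $R_\ell:\mu^\ell$ forces $D$ to be a Radon measure; lower semicontinuity of the total variation together with $\mathfrak m(W_\ell)\downarrow\mathfrak m(\overline W)$ yields $\abs D\le C\norm{u}_{L^\infty_{x,t}}^{3}\,\mathfrak m$ (first on compact sets, hence as measures), and by the coarea inequality for $\H^d$-rectifiable sets $\mathfrak m\le\H^d\llcorner J_u$. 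In particular $\abs D$ is concentrated on the countably $\H^d$-rectifiable set $J_u$.

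\emph{Conclusion and main obstacle.} By the above, $u$ and $p$ have bilateral traces on Lipschitz space-time hypersurfaces and $D$ is a Radon measure, so \cref{T:main_traces} gives $\abs D(\Sigma)=0$ for every countably $\H^d$-rectifiable $\Sigma\subset\Omega\times(0,T)$, in particular $\abs D(J_u)=0$. Since $\abs D$ is concentrated on $J_u$ we conclude $D\equiv0$, whence \eqref{local_energy_eq} holds with $D\equiv0$. The only input from the PDE is the invocation of \cref{T:main_traces} (which is where incompressibility and the pressure equation enter); everything else is soft analysis plus the $SBD$ structure. I expect the measure-theoretic bookkeeping to demand the most care: verifying through $BD$-slicing that $J_{u(\cdot,t)}$ is the $t$-slice of $J_u$ and that the absolutely continuous part slices accordingly (no Cantor part appearing), and upgrading the uniform $L^1$ bound on $R_\ell:\mu^\ell$ to the absolute continuity $\abs D\ll\H^d\llcorner J_u$ via the coarea/Minkowski-content estimates — once these are in place the remainder is routine.
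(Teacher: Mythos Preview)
Your overall strategy coincides with the paper's: show that $D$ is a Radon measure absolutely continuous with respect to the singular part of the (spatial) symmetric gradient, use the $SBD$ assumption to conclude this singular part is carried by a countably $\mathcal H^d$-rectifiable set, then invoke \cref{T:main_traces}. The point of divergence is in how you reach the concentration estimate.

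The paper never slices in time. It proves directly (this is \cref{P: upper bound DR}) that for $u\in L^\infty_{x,t}$ with $E_x u\in\mathcal M_{x,t}$ one has $\abs{D}\le C\abs{E^s_x u}$, where $E^s_x u$ is the singular part of the \emph{space-time} measure $E_x u$ with respect to $\mathcal L^{d+1}$. The argument is essentially your a.c.\ commutator computation, but run on the Radon--Nikodym decomposition of $E_x u$ as a measure on $\Omega\times(0,T)$, with no reference to slices $u(\cdot,t)$. Then, since $E_x u$ is a block of $E_{x,t}U$, one has $\abs{E^s_x u}\le\abs{E^s_{x,t}U}$, and $U\in SBD_{x,t}$ gives that $E^s_{x,t}U$ is concentrated on the rectifiable jump set $J_U$. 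No disintegration, no coarea, no identification of sliced jump sets is needed.

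Your route instead passes through the claim that $u\in SBD_{x,t}$ implies $u(\cdot,t)\in SBD_x$ for a.e.\ $t$, with $J_{u(\cdot,t)}$ equal to the $t$-slice of $J_U$ and no Cantor part appearing in the slice. This is precisely the delicate point the paper flags in \cref{R:sbd_time}: $SBD_{x,t}$ and $L^1_t SBD_x$ are not comparable in general, and the codimension-one ``slicing theory for $BD$'' you invoke is not a standard off-the-shelf result (the structure theory in \cite{ACDalM97} gives one-dimensional slices, not hyperplane slices). One can plausibly push this through here---the factor $n_x$ in the jump density kills the horizontal parts of $J_U$ and forces the time-marginal of $\abs{E^j_x u}$ to be absolutely continuous---but making that rigorous (and then checking that no Cantor part is generated in the sliced measure) is real work that you have correctly identified as the ``main obstacle'' and left unaddressed. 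The paper's space-time approach simply sidesteps the whole issue; your detour buys nothing and costs exactly the bookkeeping you were worried about.
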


Note that $C^\infty$ fails to be dense in both $SBD$ and $L^\infty$. Thus, the mechanism that here prevents dissipation from happening is completely different from that in \cref{T:main_smooth_dense}. The main point is that for vector fields with bounded deformation, the dissipation is concentrated on the singular part of the symmetric gradient of the velocity field (see \cref{P: upper bound DR}). The  \quotes{Cantor part} is ruled out by the $SBD$ assumption, while the \quotes{jump part} is shown to be $D$-negligible by \cref{T:main_traces}. Differently from the usual commutator arguments (as the one of \cref{T:main_smooth_dense}), here both the incompressibility and the momentum equations are heavily used. The $SBD$ time regularity is needed for technical reasons and it does not seem possible to remove it without making some extra assumptions (see \cref{R:sbd_time} and \cref{P:SBD cons strange}).
We remark that a similar mechanism has been previously noted in \cite{ACM05} in the context of the transport equations. However, the fact that the density may not have strong traces and the absence of the pressure make the two proofs rather different.

\begin{remark}
    In \cref{C:main_SBD} the assumption on the existence of traces for the pressure could have been relaxed to hold only on the hypersurfaces where the singular part of the space-time symmetric gradient of the velocity field is concentrated. We are not aware of any non-trivial general result which guarantees the existence of strong traces for the pressure given the ones for the velocity.
\end{remark}

\begin{remark}
Since the condition $u\in BD$ can be equivalently formulated in terms of longitudinal increments (see \cref{l: characterization of BD}), by interpolating with $u\in L^\infty$, we obtain 
$$u\in BD\qquad \Longrightarrow \qquad  \int \abs{ \frac{z}{\abs{z}}\cdot\delta_z u(x)}^p dx\lesssim \abs{z} \qquad \forall p\geq 1, \quad \delta_z u (x):= u(x+z)-u(x). $$
Thus, \cref{C:main_SBD} offers the first energy conservation result in an Onsager critical class in which only an assumption on the longitudinal structure function is made. We remark that, by \cite{Driv22}*{Lemma 1}, the mere fact that $u$ solves \eqref{E} implies $u\in B^{\sfrac{1}{p}}_{p,\infty}$ for all $p\geq 2$, while for $p\in [1,2)$ the assumption stays only at the longitudinal level.
\end{remark}

\begin{remark}
We point out that a detailed analysis of the Duchon--Robert distribution associated to Euler solutions with measure first derivatives, or only some combination of them, has been recently developed by the second author in \cite{Inv25}. Among other things, it is shown that $D\equiv 0$ whenever $u\in L^\infty_{x,t} \cap L^1_t BD_x$ and $f,p \in L^1_{x,t}$. This certainly gives a cleaner, and conceptually stronger, version of \cref{C:main_SBD}. On the other hand, we stress the fact that the result from \cite{Inv25} relies on the full $BD$ spatial regularity of the velocity field, while \cref{T:main_traces} holds whenever the velocity and the pressure possess suitable traces. Hence,  our \cref{T:main_traces}  and \cite{Inv25}*{Theorem 1.2} are independent. 
\end{remark}

\subsection{Heuristic of \cref{T:main_traces}}\label{S:heuristic}
We describe the mechanism behind the proof \cref{T:main_traces}. To simplify and avoid technicalities, we focus on the stationary case 
\begin{equation}
    \label{E_stat}
    \left\{\begin{array}{l}
 \div (u \otimes u +  p I) =f \\
\div u=0.
\end{array}\right.
\end{equation}
By the theory of Measure-Divergence vector fields (see \cref{S:div_meas_fields}) the divergence of a vector field gives mass to a hypersurface $\Sigma$ if and only if its normal component exhibits a jump across $\Sigma$. Assume that $\Sigma$ is oriented by the normal vector $n_\Sigma:\Sigma\rightarrow \mathbb{S}^{d-1}$. The existence of strong traces (in the sense of \cref{d: inner/outer trace}) implies that the normal traces of $u$ on $\Sigma$ (from both sides) are given by $u^{\Sigma_\pm}\cdot n_{\Sigma}$ and that the normal trace of any non-linear function of $u$ will enjoy the composition formula (see \cref{c: composition trace}). Thus, $\div u=0$ implies 
\begin{equation}
    \label{intro_u.n_no_jump}
    u^{\Sigma_+}\cdot n_{\Sigma}=u^{\Sigma_-}\cdot n_{\Sigma}=:u_n \qquad \text{on } \Sigma.
\end{equation}
Also, since $f\in L^1$, the momentum equation implies 
\begin{equation}
    \label{intro_momentum_no_jump}
    u^{\Sigma_+} u_n +p^{\Sigma_+} n_{\Sigma}=u^{\Sigma_-}u_n + p^{\Sigma_-}n_{\Sigma} \qquad \text{on } \Sigma.
\end{equation}
If we scalar multiply \eqref{intro_momentum_no_jump} by $n_{\Sigma}$ we deduce that $u_n^2+p^{\Sigma_+}=u_n^2 + p^{\Sigma_-}$, from which 
$p^{\Sigma_+}=p^{\Sigma_-}$. Going back to \eqref{intro_momentum_no_jump}, the fact that the pressure does not jump implies 
\begin{equation}
    \label{intro_u_no_jump_on_u_no_0}
    u^{\Sigma_+}\equiv u^{\Sigma_-} \qquad \text{on }  \left\{ x\in \Sigma \, :\, u_n\neq 0\right\}.
\end{equation}
Recall that 
   \begin{equation} \label{local_energy_eq_stat}
\div  \left(\left(\frac{\abs{u}^2}{2}+p\right)u\right)=-D + f\cdot u \qquad \text{in } \mathcal D'.
\end{equation}
Since $f\cdot u\in L^1$, by \cref{P:div and normal trace}
$$ \abs{D}(\Sigma)=\int_{\Sigma}\abs{ \frac{\abs{u^{\Sigma_+}}^2}{2}u_n+ p^{\Sigma_+}- \frac{\abs{u^{\Sigma_-}}^2}{2}u_n-p^{\Sigma_-}}=\int_{\Sigma}\abs{ \frac{\abs{u^{\Sigma_+}}^2}{2}- \frac{\abs{u^{\Sigma_-}}^2}{2}} \abs{u_n} = 0, 
$$
because of \eqref{intro_u_no_jump_on_u_no_0}. To sum up, we have argued with the following stream of implications
\begin{align}
    &u\cdot n_{\Sigma} \text{ does not jump across } \Sigma \qquad \qquad\qquad  \qquad\qquad \text{ (by incompressibility)}\\
    \Longrightarrow \quad  &p \text{ does not jump across } \Sigma \qquad \qquad \qquad\qquad  \qquad \text{(by the momentum equation)}\\
    \Longrightarrow \quad  &u \text{ does not jump across } \left\{ x\in \Sigma \, :\, u_n\neq 0\right\} \qquad\,\, \text{ (by the momentum equation)}\\
      \Longrightarrow \quad  &D \text{ does not give mass to } \Sigma.
\end{align}
It is important to note that here a continuity assumption $u,p\in C^0$ would trivialize the result since it allows to prove $\abs{D}(\Sigma)=0$ directly from \eqref{local_energy_eq_stat} and \cref{P:div and normal trace}. In the time-dependent case, the non-trivial dynamic introduces an additional complication to the above reasoning because one has to distinguish the case in which the normal vector has (or not) a component along the time axes. However, besides some technicalities, as we shall see in \cref{S:cod_1_sing} one can follow essentially the same path.

\section{Tools}
We recall some notations and tools which are used throughout the paper.

\subsection{Some functional spaces and norms}\label{S:spaces}
In this section we will not specify the domain $\Omega$ in the definition of the norms, and we tacitly assume that everything is only defined locally inside $\Omega$ whenever we are not on the whole space or on the periodic box. The $BMO$ norm of a function $f$ is defined by  
$$ \norm{f}_{BMO}:=\sup_{x,\ell} \fint_{B_\ell (x)} \abs{ f(y)-\fint_{B_\ell (x)} f } \,dy. $$
The choice of balls instead of cubes gives an equivalent norm (see \cite{Stein93}*{page 140}). We say that $f \in VMO$ if 
$$
\lim_{\ell\rightarrow 0}\sup_{x} \fint_{B_\ell (x)} \abs{ f(y)-\fint_{B_\ell (x)} f}\,dy=0.
$$
It is well-known that $VMO :=\overline{C^\infty}^{\norm{\cdot}_{BMO}}$.

\begin{remark} \label{R:equivalent bmo}
By the John--Nirenberg inequality (see for instance \cite{Stein93}*{page 144}) it follows that 
    $$ \sup_{x,\ell} \fint_{B_\ell (x)} \abs{ f(y)-\fint_{B_\ell (x)} f}^p\,dy\leq  C_p\norm{f}_{BMO}^p\qquad \forall p<\infty. $$
\end{remark}
Following \cites{FW18,BGSTW19}, for any $\alpha\in (0,1)$ and any $p\in [1,\infty)$, we set 
$$
\norm{f}_{B^{\alpha}_{p,BMO}}:=\norm{ f}_{L^p} + \sup_{\ell>0} \frac{1}{\ell^\alpha}\left(\int \fint_{B_\ell (0)} \abs{f(x+y)-f(x)}^p \,dy \, dx \right)^\frac{1}{p}.
$$
Consequently, $B^{\alpha}_{p,VMO}$ is defined as the subspace of $B^{\alpha}_{p,BMO}$ in which
$$
\lim_{\ell\rightarrow 0} \frac{1}{\ell^\alpha}\left(\int \fint_{B_\ell (0)} \abs{f(x+y)-f(x)}^p \,dy \, dx \right)^\frac{1}{p}=0.
$$
A simple exercise shows that $B^{\alpha}_{p,VMO}$ is the closure of $C^\infty$ with respect to the $B^{\alpha}_{p,BMO}$ norm.

\begin{remark}
    In fact, the two norms in $B^{\alpha}_{p,\infty}$ and $B^{\alpha}_{p,BMO}$ are equivalent (see for instance \cite{DRP25}*{Remark 5.4}). In particular $B^\alpha_{p,VMO}=B^\alpha_{p,c_0}$.
\end{remark}
  
\subsection{Time-dependent curves of measures} 
Throughout this section, let $I \subset \R$ be an interval and let $\Omega \subset \R^d$ be an open set. We denote by $\mathcal{M}(\Omega; \R^m)$ the space of finite Borel measures with values in $\R^m$. Given $\mu \in \mathcal{M}(\Omega; \R^m)$, we denote by $\abs{\mu} \in \mathcal{M}(\Omega)$ its variation, i.e. the non-negative finite Borel measure defined by 
\begin{equation} \label{eq: variation of a measure}
\langle \abs{\mu}, \varphi \rangle := \sup_{\psi \in C^0(\Omega; \R^m), \abs{\psi} \leq \varphi} \int_\Omega \psi \cdot \, d  \mu \qquad \forall \varphi \in C^0_c (\Omega), \, \varphi \geq 0.
\end{equation}
We then  set $\norm{\mu}_{\mathcal{M}(\Omega)}: = \abs{\mu}(\Omega)$. If the dimension $m$ of the target space is clear from the context, we simply write $\mathcal{M}(\Omega)$ instead of $\mathcal{M}(\Omega; \R^m)$. We will often use the shorthand notation $\mathcal{M}_x$.

\begin{definition} \label{d: curve of measures}
We say that $ \mu = \{\mu_t\}_{t \in I}$ is a weakly measurable curve of measures if for almost every $t\in I$ we have $\mu_t \in \mathcal{M}(\Omega)$ and for any test function $\varphi \in C^0_c(\Omega)$ the map $t \mapsto \int_{\Omega} \varphi(x) \, d  \mu_t(x)$ is measurable. We say that $\mu \in L^1(I; \mathcal{M}(\Omega))$ if $\mu$ is weakly measurable and the map $t \mapsto \norm{\mu_t}_{\mathcal{M}_x}$ is in $L^1(I)$. We define the norm $\norm{\mu}_{L^1_t \mathcal{M}_x} : = \int_I \norm{\mu_t}_{\mathcal{M}_x}\,dt$.
\end{definition}

Curves of measures in $L^1(I; \mathcal{M}(\Omega))$ are in correspondence with finite Borel measures on $\Omega \times I$ which admit a disintegration with respect to the Lebesgue measure on $I$. 

\begin{lemma} \label{l: curve of measure 1}
Let $\mu = \{ \mu_t\}_{t \in I} \in L^1(I; \mathcal{M}(\Omega; \R^m))$. Then linear functional 
\begin{align} \label{eq: joint measure}
    \langle \mu_t \otimes \, d  t, \varphi\rangle : = \int_I \int_{\Omega} \varphi(x,t) \, d \mu_t(x) \, d  t  \qquad \forall \varphi \in C^0_c(\Omega \times (0,T))
 \end{align} 
is a well-defined finite Borel measure on $\Omega \times (0,T)$ and satisfies $(\pi_t)_{\#} \mu \ll \mathcal{L}^1 \llcorner I$, where $\pi_t: \Omega \times I \to I$ is the projection. Viceversa, let $\mu \in \mathcal{M}(\Omega \times I)$ such that $(\pi_t)_{\#} \mu \ll \mathcal{L}^1 \llcorner I$. Then there exists $\{ \mu_t\}_{t \in I} \in L^1(I; \mathcal{M}(\Omega))$ such that $\mu = \mu_t \otimes \, d  t$. Moreover the family $\{ \mu_t\}_{t \in I}$ is uniquely determined for a.e. $t \in I$. 
\end{lemma}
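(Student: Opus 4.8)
The plan is to treat the two implications separately: the first by direct verification, the second via the disintegration theorem for measures.

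For the forward implication, I would first check that for every $\varphi\in C^0_c(\Omega\times(0,T))$ the map $t\mapsto\int_\Omega\varphi(x,t)\,d\mu_t(x)$ is measurable. This is immediate from \cref{d: curve of measures} when $\varphi(x,t)=\psi(x)\eta(t)$ is a product, and the general case follows by approximating $\varphi$ uniformly on a compact neighbourhood of $\supp\varphi$ by finite sums of such products and passing to the limit by means of
$$\Big|\int_\Omega(\varphi-\varphi_n)(x,t)\,d\mu_t(x)\Big|\le\|\varphi-\varphi_n\|_{\infty}\,\|\mu_t\|_{\mathcal{M}_x},\qquad t\mapsto\|\mu_t\|_{\mathcal{M}_x}\in L^1(I).$$
The very same bound shows that \eqref{eq: joint measure} defines a linear functional on $C^0_c(\Omega\times(0,T))$ of norm at most $\|\mu\|_{L^1_t\mathcal{M}_x}$; being bounded it extends to the sup-norm closure of $C^0_c$ and is represented, by the Riesz representation theorem, by a finite $\R^m$-valued Borel measure $\mu_t\otimes dt$. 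From the estimate one reads off $\abs{\mu_t\otimes dt}\le\abs{\mu_t}\otimes dt$ as measures on $\Omega\times(0,T)$, and testing the right-hand side against $\Omega\times A$ gives $(\pi_t)_\#\abs{\mu_t\otimes dt}(A)\le\int_A\|\mu_t\|_{\mathcal{M}_x}\,dt$ for every Borel $A\subseteq I$, whence $(\pi_t)_\#(\mu_t\otimes dt)\ll\mathcal{L}^1\llcorner I$.

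For the converse, given $\mu\in\mathcal{M}(\Omega\times(0,T))$ I would set $\lambda:=(\pi_t)_\#\abs{\mu}$ and apply the disintegration theorem to $\abs{\mu}$ along $\pi_t$: this produces a $\lambda$-a.e.\ unique weakly measurable family of probability measures $\{\sigma_t\}_{t\in I}$ on $\Omega$ with $\abs{\mu}=\int_I\sigma_t\,d\lambda(t)$, which combined with the polar factorization $\mu=g\,\abs{\mu}$, $\abs{g}=1$ $\abs{\mu}$-a.e., yields $\mu=\int_I\big(g(\cdot,t)\,\sigma_t\big)\,d\lambda(t)$. The hypothesis enters exactly here, allowing one to write $\lambda=h\,\mathcal{L}^1\llcorner I$ with $h\in L^1(I)$, $h\ge0$; setting $\mu_t:=h(t)\,g(\cdot,t)\,\sigma_t$ one gets $\abs{\mu_t}=h(t)\,\sigma_t$, so $\|\mu_t\|_{\mathcal{M}_x}=h(t)\in L^1(I)$, and testing against products $\psi(x)\eta(t)$ gives $\mu=\mu_t\otimes dt$, with weak measurability of $t\mapsto\mu_t$ inherited from that of $t\mapsto\sigma_t$. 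I would also remark that the absolute continuity hypothesis is the natural one precisely when understood for the variation $\abs{\mu}$: a vector (or signed) measure may concentrate on a slice $\Omega\times N$ with $\abs{N}=0$ while its $\R^m$-valued pushforward vanishes there, and only controlling $(\pi_t)_\#\abs{\mu}$ rules this out.

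Finally, for the a.e.\ uniqueness: if $\mu_t\otimes dt=\mu_t'\otimes dt$, then for each fixed $\psi\in C^0_c(\Omega)$ one has $\int_\Omega\psi\,d\mu_t=\int_\Omega\psi\,d\mu_t'$ for a.e.\ $t$; fixing a countable family $\{\psi_k\}\subset C^0_c(\Omega)$ that separates finite measures on $\Omega$ and intersecting the corresponding conull sets yields $\mu_t=\mu_t'$ for a.e.\ $t$. The step I expect to carry the real content is the converse direction, and within it the bookkeeping that converts the abstract disintegration of $\abs{\mu}$ into a curve genuinely lying in $L^1(I;\mathcal{M}(\Omega))$: everything hinges on the disintegration kernel $\lambda$ being absorbable into an $L^1_t$ density, which is exactly what absolute continuity of the time marginal provides, together with the routine but mildly delicate verifications of measurability in $t$.
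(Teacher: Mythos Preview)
Your argument is correct and is precisely the standard disintegration-based proof that the paper has in mind: the paper does not actually spell out a proof of this lemma but simply declares it ``quite standard'' and defers to the companion paper \cite{IVnew}*{Lemma 2.2, Lemma 2.3}. Your remark that the absolute continuity hypothesis must be read as $(\pi_t)_\#\abs{\mu}\ll\mathcal{L}^1\llcorner I$ (rather than componentwise on the $\R^m$-valued pushforward) is a pertinent clarification of the statement itself.
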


\begin{lemma} \label{l: curve of measure 2}
Let $\mu = \{ \mu_t\}_{t \in I} \in L^1(I;\mathcal{M}(\Omega))$. Then $\abs{\mu} = \{ \abs{\mu_t}\}_{t \in I}$ is in $L^1(I; \mathcal{M}(\Omega))$ and $\norm{\abs{\mu}}_{L^1_t \mathcal{M}_x} = \norm{\mu}_{L^1_t \mathcal{M}_x}$. For any $t \in I$, let $\mu_t = \mu_t^a + \mu_t^s$ be the Radon--Nikodym decomposition of $\mu_t$ with respect to $\mathcal{L}^d$, i.e. $\mu^a_t \ll \mathcal{L}^d$ and $\mu^s_t \perp \mathcal{L}^d$. Then $\{\mu^{a,s}_t\}_{t \in I} \in L^1(I; \mathcal{M}(\Omega))$ and, letting $\mu = \mu^a + \mu^s$ be the Radon--Nikodym decomposition of $\mu$ with respect to $\mathcal{L}^{d+1}$, it holds that $\mu^{a,s} = \mu^{a,s}_t \otimes \, d  t$ and $\norm{\mu^{a,s}}_{L^1_t\mathcal{M}_x} \leq \norm{\mu}_{L^1_t\mathcal{M}_x}$. 
\end{lemma}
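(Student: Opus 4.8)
The plan is to treat the two assertions in turn, reducing everything to Fubini's theorem, the uniqueness of the Radon--Nikodym decomposition in $\mathcal{M}(\Omega\times I)$, and the dyadic martingale form of the differentiation theorem for measures; beyond the definitions, no real input from \cref{l: curve of measure 1} is needed. For the first assertion, the only non-trivial point is the weak measurability of the curve $\{|\mu_t|\}$. Since $t\mapsto\mu_t$ is weakly measurable, for every $\varphi\in C^0_c(\Omega)$, $\varphi\geq 0$, one has $\langle|\mu_t|,\varphi\rangle=\sup_n\int\psi_n\cdot\,d\mu_t$ over a fixed countable family $\{\psi_n\}\subset C^0_c(\Omega;\R^m)$ which is sup-norm dense in $\{\psi:|\psi|\le\varphi\}$ (using separability of $C^0(\supp\varphi;\R^m)$ and continuity of $\psi\mapsto\int\psi\cdot d\mu_t$); being a countable supremum of measurable functions of $t$, this is measurable. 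Hence $\{|\mu_t|\}$ is weakly measurable, and since the variation of a non-negative measure is itself, $\||\mu_t|\|_{\mathcal{M}_x}=|\mu_t|(\Omega)=\|\mu_t\|_{\mathcal{M}_x}$ for a.e.\ $t$, which gives $\{|\mu_t|\}\in L^1(I;\mathcal{M}(\Omega))$ and $\||\mu|\|_{L^1_t\mathcal{M}_x}=\|\mu\|_{L^1_t\mathcal{M}_x}$.

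For the second assertion, I would first produce a jointly measurable density. By a monotone class argument starting from $C^0_c$, the maps $t\mapsto\mu_t(B)$ and $t\mapsto|\mu_t|(B)$ are measurable for every Borel $B\subset\Omega$. Fix a dyadic lattice of half-open cubes and let $Q_k(x)$ be the generation-$k$ cube containing $x$. Then $g_k(x,t):=|Q_k(x)|^{-1}\mu_t(Q_k(x))$ is, for each $k$, a genuine step function in $x$ with $\mathcal{L}^1$-measurable coefficients in $t$, hence $\mathcal{L}^{d+1}$-measurable, and by the dyadic martingale convergence theorem $g(x,t):=\lim_k g_k(x,t)$ exists and equals $\tfrac{d\mu_t^a}{d\mathcal{L}^d}(x)$ for a.e.\ $t$ and $\mathcal{L}^d$-a.e.\ $x$, while $h(x,t):=\limsup_k|Q_k(x)|^{-1}|\mu_t|(Q_k(x))=+\infty$ for $|\mu_t^s|$-a.e.\ $x$ and $\mathcal{L}^d(\{h(\cdot,t)=+\infty\})=0$ for a.e.\ $t$. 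Since $\int_{\Omega\times I}|g|=\int_I|\mu_t^a|(\Omega)\,dt\le\|\mu\|_{L^1_t\mathcal{M}_x}<\infty$, the measure $\mathbf{m}^a:=g\,\mathcal{L}^{d+1}$ is finite and absolutely continuous, and we set $\mathbf{m}^s:=\mu-\mathbf{m}^a$. By construction $\mathbf{m}^a=\mu_t^a\otimes\,dt$ and $\mathbf{m}^s=\mu_t^s\otimes\,dt$, with $\{\mu_t^a\}$ weakly measurable (Fubini on $g$) and $\{\mu_t^s=\mu_t-\mu_t^a\}$ weakly measurable by difference; both lie in $L^1(I;\mathcal{M}(\Omega))$ because $\|\mu_t^{a,s}\|_{\mathcal{M}_x}\le\|\mu_t\|_{\mathcal{M}_x}$.

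It then remains to check $\mathbf{m}^s\perp\mathcal{L}^{d+1}$. The set $S:=\{(x,t):h(x,t)=+\infty\}$ is $\mathcal{L}^{d+1}$-measurable with $\mathcal{L}^{d+1}(S)=\int_I\mathcal{L}^d(S_t)\,dt=0$ by Fubini, and for every Borel $A\subset S^c$ one has $\bigl|\mathbf{m}^s(A)\bigr|=\bigl|\int_I\mu_t^s(A_t)\,dt\bigr|\le\int_I|\mu_t^s|(S_t^c)\,dt=0$, because $|\mu_t^s|$ is concentrated on $S_t$ for a.e.\ $t$. Thus $\mathbf{m}^s$ annihilates every Borel subset of $S^c$, so $|\mathbf{m}^s|(S^c)=0$ and $\mathbf{m}^s\perp\mathcal{L}^{d+1}$. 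Consequently $\mu=\mathbf{m}^a+\mathbf{m}^s$ is \emph{the} Radon--Nikodym decomposition of $\mu$ with respect to $\mathcal{L}^{d+1}$, whence $\mu^a=\mu_t^a\otimes\,dt$ and $\mu^s=\mu_t^s\otimes\,dt$ (and $\mu_t^{a,s}$ are a.e.\ the Radon--Nikodym parts of $\mu_t$, by the way $g$ was constructed). Finally $\|\mu^{a,s}\|_{L^1_t\mathcal{M}_x}\le\|\mu\|_{L^1_t\mathcal{M}_x}$ is immediate from $\|\mu_t^{a,s}\|_{\mathcal{M}_x}=|\mu_t^{a,s}|(\Omega)\le|\mu_t|(\Omega)=\|\mu_t\|_{\mathcal{M}_x}$, valid a.e.\ since $|\mu_t|=|\mu_t^a|+|\mu_t^s|$ by mutual singularity of $\mu_t^a$ and $\mu_t^s$.

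The step I expect to be the main obstacle is the measurability bookkeeping in the second assertion: producing a genuinely $\mathcal{L}^{d+1}$-jointly-measurable density (not merely a family defined for a.e.\ $t$) together with a single $\mathcal{L}^{d+1}$-null set carrying $\mathbf{m}^s$. Working with half-open dyadic cubes is what makes the $g_k$ honest step functions in $x$, and encoding the singular set as $\{h=+\infty\}$ with $h$ a $\limsup$ of such step functions makes it measurable; the delicate point is to obtain $\mathbf{m}^s\perp\mathcal{L}^{d+1}$ by verifying that $\mathbf{m}^s$ kills every Borel subset of $S^c$, rather than attempting to compute $|\mathbf{m}^s|$ via a disintegration of the variation, which would be circular.
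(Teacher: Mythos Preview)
Your argument is correct. The paper itself does not give a proof of this lemma; it simply records the statement as ``quite standard'' and points to \cite{IVnew}*{Lemma 2.2, Lemma 2.3} for details, so there is no in-paper proof to compare against line by line.

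That said, your approach is exactly the kind of argument one expects behind the word ``standard'': reduce weak measurability of $\{|\mu_t|\}$ to a countable supremum via separability of $C^0_c$, and handle the Radon--Nikodym decomposition by building a \emph{jointly} $\mathcal{L}^{d+1}$-measurable density through dyadic averages, so that the singular support can be encoded as the measurable set $\{h=+\infty\}$. You correctly isolated the only genuine issue---producing a single $\mathcal{L}^{d+1}$-null carrier for $\mathbf m^s$ rather than a $t$-dependent family of $\mathcal{L}^d$-null sets---and your verification that $\mathbf m^s$ annihilates every Borel subset of $S^c$ (hence $|\mathbf m^s|(S^c)=0$) is the right way to conclude $\mathbf m^s\perp\mathcal{L}^{d+1}$ without circularly invoking a disintegration of $|\mathbf m^s|$. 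The minor technicality that $\Omega$ need not be tiled by dyadic cubes is harmless: extend each $\mu_t$ by zero to $\R^d$, or note that for every $x\in\Omega$ the cube $Q_k(x)$ eventually lies in $\Omega$.
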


The proof of the above lemmas is quite standard. See for instance \cite{InvVi24}*{Lemma 2.2 \& Lemma 2.3}.

\subsection{Measure-Divergence vector fields and traces}\label{S:div_meas_fields}

We recall the notion of the distributional normal trace \cites{ACM05,Shv09,CCT19,CTZ09}. Let $\Omega \subset \R^d$ be an open set. We say that a vector field $V \in L^\infty(\Omega;\R^d)$ is in $\mathcal{MD}^\infty(\Omega)$ if $\div V \in \mathcal{M}(\Omega)$. For such vector fields, it is possible to define the outer normal trace on $\partial \Omega$ coherently with the Gauss--Green formula
\begin{equation} \label{eq: normal distributional trace}
    \langle \tr_n(V, \partial \Omega), \varphi \rangle := \int_{\Omega} \nabla \varphi \cdot V \, d x + \int_{\Omega} \varphi \, d (\div V) \qquad \forall\varphi \in C^\infty_c(\R^d).
\end{equation}
The formula above defines a distribution supported on $\partial \Omega$. If $V \in C^1(\overline{ \Omega};\R^d)$ and $\Omega$ has Lipschitz boundary, $\tr_n(V, \partial \Omega)$ is induced by the integration on $\partial \Omega$ of $V \cdot n_{\partial \Omega}$, where $n_{\partial \Omega}$ is the outer normal. We always adopt the convention that the boundary of an open sets is oriented with the outer unit normal, denoted by $n_{\partial \Omega}$. It turns out that if $V \in \mathcal{MD}^\infty(\Omega)$ and $\Omega$ has Lipschitz boundary, then $\tr_n(V, \partial \Omega)$ is induced by an $L^\infty$ function on $\partial \Omega$, still denoted by $\tr_n(V, \partial \Omega)$ (see for instance \cite{ACM05}*{Proposition 3.2}). Moreover, this notion of trace is local in the sense that for any Borel set $\Sigma \subset \partial \Omega_1\cap \partial \Omega_2$ such that $n_{\partial \Omega_1}(x) = n_{\partial \Omega_2}(x)$ for $\mathcal{H}^{d-1}$-a.e. $x \in \Sigma$, where $\Omega_1, \Omega_2$ are Lipschitz open sets contained in $\Omega$, we have that the two traces coincide, i.e. $\tr_n(V, \partial \Omega_1) = \tr_n(V, \partial \Omega_2)$ for $\mathcal{H}^{d-1}$-a.e. $x \in \Sigma$ (see \cite{ACM05}*{Proposition 3.2}). This property allows one to define the distributional normal traces on an oriented Lipschitz hypersurface $\Sigma \subset \Omega$. Choose open sets $\Omega_1, \Omega_2$ such that $\Sigma \subset \partial \Omega_1 \cap \partial \Omega_2$ and $n_\Sigma(x) = n_{\partial \Omega_1}(x) = -n_{\partial \Omega_2}(x)$ for $\mathcal{H}^{d-1}$-a.e. $x \in \Sigma$ (see \cref{fig:trace}). Then, we define 
$$\tr_n(V, \Sigma_-) : = \tr_n(V, \partial \Omega_1), \qquad \tr_n(V, \Sigma_+) := -\tr_n(V, \partial \Omega_2) \qquad \text{ on } \Sigma.$$

\begin{figure}
\includegraphics[width=10cm,height=4.3cm]{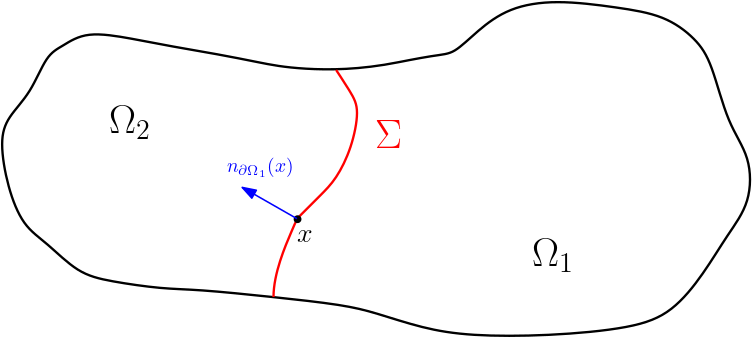}
\caption{Defining the distributional normal trace on $\Sigma$.}
\label{fig:trace}
\end{figure}

\begin{proposition}[\cite{ACM05}*{Proposition 3.4}]\label{P:div and normal trace} Let $V\in \mathcal{MD}^\infty(\Omega)$ and $\Sigma\subset \Omega$ be any oriented Lipschitz hypersurface. Then 
$$
\abs{\div V}(\Sigma)=\int_\Sigma \abs{\tr_n (V,\Sigma_+)- \tr_n (V,\Sigma_-)}  \, d\mathcal H^{d-1}.
$$
\end{proposition}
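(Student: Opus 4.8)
The plan is to localize $\Sigma$ to a single Lipschitz graph and then read the identity off from two applications of the Gauss--Green formula \eqref{eq: normal distributional trace}, one on each side of $\Sigma$, together with the fact --- already recalled in the excerpt following \cite{ACM05}*{Proposition 3.2} --- that for $V\in\mathcal{MD}^\infty$ the distributional normal trace on a Lipschitz boundary is represented by an $L^\infty$ density against $\mathcal{H}^{d-1}$. First I would write $\Sigma$ as a countable disjoint union of Borel sets, each contained in the graph $\Gamma_k=\{x_d=g_k(x')\}$ of a Lipschitz function $g_k$ over a ball $B_k'\subset\R^{d-1}$, with the closed cylinder $\overline{C_k}\subset\Omega$, $C_k:=B_k'\times(a_k,b_k)$, chosen so that $a_k<\min_{\overline{B_k'}}g_k\le\max_{\overline{B_k'}}g_k<b_k$, and with the orientation $n_\Sigma$ matching the graph normal $\mathcal{H}^{d-1}$-a.e.\ on $\Gamma_k$ (this is the definition of a Lipschitz hypersurface plus a standard covering argument, flipping $g_k\mapsto-g_k$ or relabelling the last coordinate when needed). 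By countable additivity of $|\div V|$ and of $\mathcal{H}^{d-1}\llcorner\Sigma$, and since restricting the finite measure $\div V$ to the prescribed Borel subset of $\Gamma_k$ and invoking locality of the normal trace on that subset is harmless, it suffices to establish, for each $k$,
\[
(\div V)\llcorner\Gamma_k \;=\; \bigl(\tr_n(V,(\Gamma_k)_+)-\tr_n(V,(\Gamma_k)_-)\bigr)\,\mathcal{H}^{d-1}\llcorner\Gamma_k ;
\]
the Proposition then follows by taking total variations and summing over $k$.

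Fixing $k$, I would set $\Omega_1:=\{x\in C_k:x_d<g_k(x')\}$ and $\Omega_2:=\{x\in C_k:x_d>g_k(x')\}$, which are bounded Lipschitz open subsets of $\Omega$ with $n_{\Gamma_k}=n_{\partial\Omega_1}=-n_{\partial\Omega_2}$ on $\Gamma_k$ and with $\partial\Omega_i\cap C_k=\Gamma_k$; by definition $\tr_n(V,(\Gamma_k)_-)=\tr_n(V,\partial\Omega_1)$ and $\tr_n(V,(\Gamma_k)_+)=-\tr_n(V,\partial\Omega_2)$ on $\Gamma_k$, and by \cite{ACM05}*{Proposition 3.2} both are represented by $L^\infty(\Gamma_k,\mathcal{H}^{d-1})$ functions. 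Then for any $\varphi\in C^\infty_c(\R^d)$ with $\supp\varphi\Subset C_k$ I would add the two instances of \eqref{eq: normal distributional trace} for $\Omega_1$ and $\Omega_2$: since $\Gamma_k$ is Lebesgue-negligible, the volume terms combine to $\int_{C_k}\nabla\varphi\cdot V\,dx=-\int_\Omega\varphi\,d(\div V)$ (the last equality being the very definition of $\div V$ as a measure, legitimate because $\supp\varphi\Subset\Omega$), while the measure terms combine to $\int_\Omega\varphi\,d(\div V)-\int_{\Gamma_k}\varphi\,d(\div V)$, so the two full-space contributions cancel and one is left with
\[
\bigl\langle \tr_n(V,\partial\Omega_1)+\tr_n(V,\partial\Omega_2),\,\varphi\bigr\rangle \;=\; -\int_{\Gamma_k}\varphi\,d(\div V).
\]
Because $\supp\varphi\Subset C_k$ and $\partial\Omega_i\cap C_k=\Gamma_k$, the left-hand side equals $\int_{\Gamma_k}\bigl(\tr_n(V,(\Gamma_k)_-)-\tr_n(V,(\Gamma_k)_+)\bigr)\varphi\,d\mathcal{H}^{d-1}$, and since $\varphi$ is arbitrary this is exactly the displayed measure identity on $\Gamma_k$.

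I expect the only genuinely nontrivial ingredient to be the one I am quoting: that the distributional normal trace of an $\mathcal{MD}^\infty$ field on a Lipschitz domain is an $L^\infty$ function, \cite{ACM05}*{Proposition 3.2}, whose proof requires flattening the boundary and an approximation/duality estimate. Granting it, everything above is simply a bookkeeping of the Gauss--Green identity across the two sides of $\Sigma$, and the remaining points --- the countable graph decomposition of a Lipschitz hypersurface, the consistent matching of $n_\Sigma$ with each graph normal, and the legitimacy of restricting $\div V$ and $\mathcal{H}^{d-1}\llcorner\Sigma$ to Borel subsets --- are routine. A possible alternative route, mollifying $V$ and passing to the limit, seems less clean because of the delicacy of proving trace convergence, so I would favour the direct argument above.
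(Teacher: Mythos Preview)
Your argument is correct and is essentially the standard proof of \cite{ACM05}*{Proposition 3.4}, which the paper merely cites without reproducing: one applies the Gauss--Green identity \eqref{eq: normal distributional trace} on the two Lipschitz subdomains $\Omega_1,\Omega_2$ bounded by the graph, adds, and uses that the volume integrals reconstruct $-\langle\div V,\varphi\rangle$ while the measure integrals over the open sets $\Omega_1,\Omega_2$ miss exactly the mass on $\Gamma_k$. The only point worth tightening is the passage from the local measure identity on each $\Gamma_k$ to the global total variation identity on $\Sigma$: you are implicitly using that restriction to a Borel set commutes with total variation, so that $|\div V|\llcorner\Sigma=|(\div V)\llcorner\Sigma|$, together with locality of the normal traces to patch the pieces consistently --- both facts are standard, but it would not hurt to state them explicitly.
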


The notion of distributional normal trace is usually too weak to deal with non-linear problems. In particular, if $V$ has vanishing distributional normal trace and $\rho$ is a bounded scalar  function, it is not guaranteed that $\rho V$ has a distributional normal trace and, even if it does, it may not vanish. To overcome this problem, we recall a stronger notion of normal trace, which is inspired by the theory of $BV$ functions. We refer to  \cites{DRINV23,CDIN24,AFP00} for an extensive presentation.

\begin{definition} \label{d: inner/outer trace}
Let $\Omega \subset \R^d$ be an open set,  let $v \in L^\infty(\Omega ;\R^m)$ and let $\Sigma \subset \Omega$ be a Lipschitz hypersurface oriented by  $n_{\Sigma}$. We say that $v^{\Sigma_{\pm}} \in L^\infty(\Sigma ;\R^m)$ is the inner/outer trace on $\Sigma$ if for any sequence $r_j \to 0$ there exists an $\mathcal{H}^{d-1}$-negligible set $\mathcal{N}_{\pm} \subset \Sigma$ such that 
\begin{equation}
    \lim_{j \to \infty} \fint_{B_{r_j}^{\pm}(x)} \abs{v(z) - v^{\Sigma_{\pm}}(x)} \, d z = 0 \qquad \forall x \in \Sigma \setminus \mathcal{N}_{\pm},
\end{equation}
 where we set 
\begin{equation}
    B_{r}^{\pm}(x) := \left\{ z \in B_{r}(x) \,\colon \,\langle \pm\, n_{\Sigma}(x), z-x \rangle \geq 0 \right\}. 
\end{equation}
We say that $v$ has bilateral traces on $\Sigma$ if $v$ has both inner and outer traces on $\Sigma$. Similarly, given $V \in L^\infty(\Omega; \R^d)$, we say that $V^{\Sigma_{\pm}}_n \in L^\infty(\Sigma)$ is the inner/outer Lebesgue \emph{normal} trace on $\Sigma$ if for any sequence $r_j \to 0$ there exists an $\mathcal{H}^{d-1}$-negligible set $\mathcal{N}_{\pm} \subset \Sigma$ such that 
\begin{equation}
    \lim_{j \to \infty} \fint_{B_{r_j}^{\pm}(x)} \abs{V(z)\cdot n_\Sigma(x) - V^{\Sigma_{\pm}}_n(x)} \, d z = 0 \qquad \forall x \in \Sigma \setminus \mathcal{N}_{\pm}.
\end{equation}
\end{definition}

\begin{remark}
Concerning the definition of the normal Lebesgue trace, some remarks are in order.
\begin{itemize} 
\item The unit normal vector $n_\Sigma$ is defined $\mathcal{H}^{d-1}$-a.e on $\Sigma$, so we can assume it to be defined out of the negligible sets $\mathcal{N}_\pm$.
\item The term $n_\Sigma(x)$ can be replaced with the gradient of the \emph{signed} distance from $\Sigma$. More precisely, given $\Omega_1, \Omega_2$ as above, define the signed distance from $\Sigma$ by 
    \[
        \delta_{\Sigma}(y) := \begin{cases} 
                                -\dist(y,\Sigma), \text{ if } y\in\Omega_1 \\
                                \dist(y,\Sigma), \text{ if } y\in\Omega_2 .
                                \end{cases}
    \]
The signs are chosen in such a way that $\nabla\delta_\Sigma(y)$ is a local extension of $n_\Sigma(x)$ when $y$ is close to $x$. \cref{l: lebesgue point of distance} below implies that this replacement gives an equivalent definition.
\item  Following \cite{CDIN24}, one can also replace the average on half-balls $B^\pm_{r_j}(x)$ with an average on $\Omega_i\cap B_{r_j}(x)$. Indeed, at $\mathcal{H}^{d-1}$-a.e. $x \in \Sigma$ there exists a tangent plane $\Pi_x = \{n_\Sigma\}^\perp$ (in the usual sense) to $\Sigma$ and it holds\footnote{Here $\triangle$ denotes the symmetric difference between two sets, i.e. $A\triangle B= (A\setminus B) \cup (B\setminus A)$. }
\begin{align}
    \abs{(\Omega_1\cap B_{r_j}(x)) \triangle B_{r_j}^-(x)} = o(r_j^d) \qquad \text{and}\qquad 
    \abs{(\Omega_2\cap B_{r_j}(x)) \triangle B_{r_j}^+(x)} = o(r_j^d)
\end{align}
as $r_j\to 0$. Since only bounded functions are involved in the integrals, this also implies that the two different averaging procedures give the same result.
\end{itemize}
\end{remark}

By a straightforward modification of the proof of \cite{DRINV23}*{Theorem 2.4}, we deduce the following. 

\begin{lemma} \label{l: lebesgue point of distance}
Let $\Sigma \subset \R^d$ be a Lipschitz hypersurface oriented by a normal unit vector $n_\Sigma$. For $\mathcal{H}^{d-1}$-a.e. $x \in \Sigma$ it holds  
\begin{equation}
    \lim_{r \to 0} \fint_{B^{\pm}_r(x) } \abs{ \nabla \dist(y,\Sigma) \mp n_\Sigma(x) } \, d y = 0. 
\end{equation}

\end{lemma}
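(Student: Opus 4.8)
The plan is to follow the blow-up strategy behind \cite{DRINV23}*{Theorem 2.4}: reduce to $\mathcal H^{d-1}$-a.e.\ point of $\Sigma$ where a classical tangent hyperplane exists, rescale so that $d_\Sigma$ converges uniformly to the distance from that hyperplane, and then upgrade this $C^0$ convergence to the desired $L^1$-average convergence of the gradients by a soft Hilbert-space argument.

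\textbf{Step 1 (reduction to good points).} Writing $\Sigma$ locally as the graph of a Lipschitz function and applying Rademacher's theorem, at $\mathcal H^{d-1}$-a.e.\ $x\in\Sigma$ the graph is differentiable at the corresponding point, so $\Sigma$ admits a classical tangent hyperplane $\Pi_x=\{n_\Sigma(x)\}^\perp$ at $x$. Fix such an $x$; after an isometry we may assume $x=0$, $n_\Sigma(0)=e_d$, and that near $0$ the set $\Sigma$ is the graph of a Lipschitz $g$ with $g(0)=0$ and $\nabla g(0)=0$.

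\textbf{Step 2 (first order expansion of $d_\Sigma$).} I claim $d_\Sigma(y)=|y_d|+o(|y|)$ as $y\to 0$. The upper bound follows from $d_\Sigma(y)\le |y_d-g(y')|\le |y_d|+|g(y')|$ together with $|g(y')|=o(|y'|)$. For the lower bound, for $|y|$ small the nearest point of $\Sigma$ to $y$ lies in the local graph portion (points of $\Sigma$ outside a fixed ball are uniformly farther than $|y|\ge d_\Sigma(y)$); if $(z^*,g(z^*))$ is an almost minimizer, then $|y'-z^*|\le d_\Sigma(y)\le |y|$, so $|z^*|\le 2|y|$, so $|g(z^*)|=o(|y|)$, whence $d_\Sigma(y)\ge |y_d-g(z^*)|\ge |y_d|-o(|y|)$. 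Setting $\phi_r(y):=d_\Sigma(ry)/r$, the maps $\phi_r$ are $1$-Lipschitz and $\phi_r\to\phi$ uniformly on $\overline{B_1}$, where $\phi(y):=|y_d|$.

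\textbf{Step 3 (from $C^0$ to $L^1$ convergence of gradients).} Since $\nabla\phi_r(y)=\nabla d_\Sigma(ry)$, the change of variables $z=ry$ shows that the claim of the lemma on $B^+_r(x)$ amounts to $\fint_{B_1^+}|\nabla\phi_r-e_d|\to 0$, with $B_1^+=\{y\in B_1:y_d\ge 0\}$ (and symmetrically on $B_1^-$ with $-e_d$). Expanding the square and using $|\nabla\phi_r|\le 1$,
\[
\int_{B_1^+}|\nabla\phi_r-e_d|^2\,dy\le 2|B_1^+|-2\int_{B_1^+}\partial_d\phi_r\,dy .
\]
By the Gauss--Green formula $\int_{B_1^+}\partial_d\phi_r\,dy=\int_{\partial B_1^+}\phi_r\,(\nu\cdot e_d)\,d\mathcal H^{d-1}$ depends continuously on $\phi_r|_{\partial B_1^+}$ in $C^0$, and $\phi_r|_{\partial B_1^+}\to |y_d|$ uniformly, so $\int_{B_1^+}\partial_d\phi_r\,dy\to\int_{B_1^+}\partial_d\phi\,dy=|B_1^+|$; hence $\int_{B_1^+}|\nabla\phi_r-e_d|^2\to 0$ and Cauchy--Schwarz gives the $L^1$-average bound. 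The identical computation with $-e_d$ on $B_1^-$ concludes, and since the exceptional set from Step 1 is $\mathcal H^{d-1}$-negligible the statement holds $\mathcal H^{d-1}$-a.e.

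\textbf{Main obstacle.} The only genuinely delicate point is Step 2: one must check that, for $y$ near the good point, $d_\Sigma(y)$ is realized by the local graph of $\Sigma$ (so that $\nabla g(0)=0$ can be exploited), and that the $o(|y|)$ error is uniform enough to yield uniform convergence $\phi_r\to\phi$. Everything else is a rescaling/functional-analytic routine; indeed the Gauss--Green step can be replaced by the observation that $\phi_r\rightharpoonup^*\phi$ in $W^{1,\infty}(B_2)$, so $\nabla\phi_r\rightharpoonup\nabla\phi$ weakly in $L^2(B_1^\pm)$ with $\|\nabla\phi_r\|_{L^2(B_1^\pm)}\to |B_1^\pm|^{1/2}=\|\nabla\phi\|_{L^2(B_1^\pm)}$, hence strongly in $L^2$ and therefore in $L^1$.
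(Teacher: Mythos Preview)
Your proof is correct and follows essentially the same blow-up strategy that the paper invokes via \cite{DRINV23}*{Theorem 2.4}: reduce to a Rademacher point, rescale so that $d_\Sigma$ converges uniformly to the distance from the tangent hyperplane, and upgrade to $L^1$ convergence of gradients. The Hilbert-space trick in Step~3 (expanding the square and passing to the limit in $\int_{B_1^+}\partial_d\phi_r$ via Gauss--Green and the uniform convergence of the boundary values) is clean and exactly in the spirit of the reference; your remark that one could equivalently argue via weak $L^2$ convergence plus convergence of norms implicitly uses $|\nabla d_\Sigma|=1$ a.e.\ (true since $\Sigma$ is $\mathcal L^d$-null), which you might make explicit, but the main argument only needs $|\nabla\phi_r|\le 1$ and is complete as written.
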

The trace of non-linear quantities of bounded vector fields enjoy the composition formula. 

\begin{lemma} \label{c: composition trace}
Let $\Omega \subset \R^d$ be an open set, let $\Sigma \subset \Omega$ be a Lipschitz hypersurface oriented by a normal unit vector $n_{\Sigma}$ and let $n,m \in \N$. Let $V \in L^\infty(\Omega; \R^n)$ be a vector field with inner/outer trace on $\Sigma$ according to \cref{d: inner/outer trace}. Then, for $g \in C(\R^n; \R^m)$ it holds that $g(V)$ has inner/outer trace on $\Sigma$ and 
\begin{equation} \label{eq: formula composition of trace}
 g(V)^{\Sigma_{\pm}} =  g(V^{\Sigma_{\pm}}).
\end{equation}
Moreover, if $n=m=d$, then the inner/outer normal Lebesgue trace of $g(V)$ on $\Sigma$ is given by 
\begin{equation} \label{eq: formula composition of normal trace}
 g(V)^{\Sigma_{\pm}}_n = g(V^{\Sigma_{\pm}}) \cdot n_\Sigma. 
\end{equation}
\end{lemma}

\begin{proof}
We focus on the inner case. Let $\omega: [0, +\infty) \to [0, +\infty)$ be the modulus of continuity of $g$ in the range of $V$. Without loss of generality we can assume that $\omega$ is concave. Then, for any sequence $r_j \to 0$, by Jensen's inequality
\begin{align}
    \fint_{B^+_{r_j}(x)} \abs{  g(V(z)) - g(V^{\Sigma_+}(x)) } \, d z & \leq \fint_{B^+_{r_j}(x)} \omega\left( \abs{V(z) - V^{\Sigma_+}(x)} \right) \, d z \\
    &\leq \omega \left( \fint_{B^+_{r_j}(x)}  \abs{V(z) - V^{\Sigma_+}(x)}  \, d z \right),
\end{align}
which vanishes as $j \to + \infty$ for $\mathcal{H}^{d-1}$-a.e. $x \in \Sigma$. Then, \eqref{eq: formula composition of trace} is proved. To conclude, \eqref{eq: formula composition of normal trace} directly follows by the definition.
\end{proof}

By \cite{CDIN24}*{Theorem 1.4},  the inner/outer normal Lebesgue trace agrees with the distributional normal one, whenever both exist.  

\begin{theorem} \label{T: distributional trace vs lebesgue trace}
Let $\Omega \subset \R^d$ be an open set, let $V \in L^\infty(\Omega; \R^d)$ such that $\div V \in \mathcal{M}(\Omega)$ and let $\Sigma \subset \Omega$ be a Lipschitz hypersurface oriented by $n_{\Sigma}$. Assume that $V$ has inner/outer normal Lebesgue trace on $\Sigma$ according to \cref{d: inner/outer trace}. Then, it holds 
\begin{equation}
    \tr_{n}(V, \Sigma_{\pm}) = V^{\Sigma_\pm}_n. 
\end{equation}
\end{theorem}

\subsection{Vector fields with bounded deformation} \label{S:gmt}

Here we recall some basic facts about the spaces of vector field with bounded deformation. We refer to \cites{TS80,ACDalM97, T83} for a detailed presentation of the topic. Given an open set $ \Omega \subset \R^d$, the space of functions with bounded deformation is given by vector fields whose symmetric gradient is a finite measure, i.e.
$$ BD(\Omega):=\left\{u\in L^1(\Omega;\R^d)\,: \, E u := \frac{\nabla u+\nabla u^T}{2}\in \mathcal{M}(\Omega;\R^{d\times d}) \right\}. $$
We say that $u \in BD_{\rm loc}(\Omega)$ if $u \in BD(O)$ for any open set $O \subset \joinrel \subset \Omega$. 
Vector fields with bounded deformation generalize functions with bounded variations, i.e. functions whose full gradient is a measure. Given $u: \Omega \to \R^d$, we denote by 
$$\delta_y u(x) := u(x+y)- u(x) \qquad x \in \Omega, \quad \abs{y} < \dist(x, \partial \Omega). $$ 
Vector fields with bounded deformation are characterized by the fact that sequences of difference quotients in \emph{longitudinal directions} stay  bounded in $L^1$. 

\begin{lemma} \label{l: characterization of BD} 
Let $\Omega \subset \R^d$ be an open set and $u \in L^1(\Omega; \R^d)$ such that 
\begin{equation} \label{eq: char BD}
    \sup_{\e < \dist(A, \partial \Omega)} \norm{ \e^{-1} y \cdot \delta_{\e y} u}_{L^1(A)} < +\infty \qquad \forall  A \subset \joinrel \subset \Omega \text{ open}, \quad \forall y \in B_1. 
\end{equation}
Then, $u \in BD_{\rm loc}(\Omega)$. Conversely, if $u \in BD_{\rm loc}(\Omega)$ then
\begin{equation}\label{BD_est_increment}
\norm{ \e^{-1} y \cdot \delta_{\eps y}u}_{L^1(A)}\leq \abs{y}^2 \abs{E u} \left( \overline{(A)_\eps}\right) \qquad \forall A \subset \joinrel \subset \Omega \text{ open}, \quad \forall \e < \dist(A, \partial \Omega),
\end{equation}
where $(A)_\eps := A + B_\e(0)$. 
\end{lemma}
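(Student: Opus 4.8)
The plan is to exploit the elementary identity relating longitudinal difference quotients of $u$ to the symmetric part of the (distributional) gradient. The starting point is that for smooth $u$ and fixed unit direction $y$, the fundamental theorem of calculus gives
\[
y \cdot \delta_{\e y} u(x) = y\cdot\big(u(x+\e y) - u(x)\big) = \int_0^\e \frac{d}{ds}\,\big(y\cdot u(x+sy)\big)\,ds = \int_0^\e \sum_{i,j} y_i y_j \,\partial_j u_i(x+sy)\,ds = \int_0^\e \big( Eu(x+sy)\, y\big)\cdot y\,ds,
\]
where the last equality uses that $y\otimes y$ is a symmetric matrix, so only the symmetric part $Eu$ of $\nabla u$ survives the contraction. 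This is the single algebraic fact on which everything rests.

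\emph{From $BD$ to the increment bound \eqref{BD_est_increment}.} First I would mollify: for $u \in BD_{\rm loc}(\Omega)$ set $u_\delta := u * \rho_\delta$, which is smooth on any $A\subset\joinrel\subset\Omega$ for $\delta$ small and satisfies $Eu_\delta = (Eu)*\rho_\delta$. Applying the identity above to $u_\delta$, then integrating in $x\in A$, using Fubini and the change of variables $x\mapsto x+sy$, and estimating with $|y|\le 1$,
\[
\norm{\e^{-1} y\cdot \delta_{\e y} u_\delta}_{L^1(A)} \le \e^{-1}\int_A \int_0^\e \big| (Eu_\delta(x+sy)\,y)\cdot y\big|\,ds\,dx \le |y|^2 \sup_{0\le s\le \e}\int_{A+sy} |Eu_\delta| \le |y|^2\, |Eu_\delta|\big(\overline{(A)_\e}\big).
\]
Since $|Eu_\delta|\big(\overline{(A)_\e}\big) = \big((Eu)*\rho_\delta\big)\big(\overline{(A)_\e}\big) \le |Eu|\big(\overline{(A)_{\e+\delta}}\big)$, letting $\delta\to 0$ (with $u_\delta\to u$ in $L^1_{\rm loc}$, so that a subsequence of the increments converges a.e., and using Fatou, together with outer regularity of $|Eu|$ to pass from $\overline{(A)_{\e+\delta}}$ to $\overline{(A)_\e}$) yields \eqref{BD_est_increment}. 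A small point to handle is that \eqref{BD_est_increment} is asserted for Borel $A$, not just open $A$; one proves it first for open $A$ and then passes to Borel sets by inner/outer approximation, again using regularity of the finite measure $|Eu|$.

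\emph{From the uniform increment bound \eqref{eq: char BD} to $u\in BD_{\rm loc}$.} Here I would test against a smooth compactly supported matrix field $\Phi = (\Phi_{ij})$ and show that the distribution $\langle Eu, \Phi\rangle := -\sum_{i,j}\int u_i\,\partial_j\Phi_{ij}$ extends to a bounded functional on $C^0_c$, hence is a measure by Riesz. Polarizing the quadratic form, the directional quantity $y\cdot\delta_{\e y}u$ for general pairs of indices is recovered from the values at $y$, $e_i$, $e_j$, $e_i+e_j$; so \eqref{eq: char BD} for all $y\in B_1$ controls all mixed second differences. Concretely, write $\langle Eu,\Phi\rangle$ as a limit of difference-quotient expressions: for a fixed direction $y$, $\int_A (\e^{-1} y\cdot\delta_{\e y} u)\,\psi\,dx = -\int_A (y\cdot u(x))\,\e^{-1}\big(\psi(x-\e y)-\psi(x)\big)\,dx \to \int_A (y\cdot u)\,(y\cdot\nabla\psi)\,dx$ for $\psi\in C^1_c$, and the left side is bounded by $\|\psi\|_\infty$ times the supremum in \eqref{eq: char BD}; polarizing in $y$ identifies $\int (y\cdot u)(y\cdot\nabla\psi)$ with the pairing of the symmetric gradient against $y\otimes\nabla\psi$, and density of such test fields in $C^0_c$ gives that $Eu$ is (locally) a finite measure. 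The routine bookkeeping is the polarization and keeping track of which directions are needed; I will not spell it out.

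\emph{Main obstacle.} The genuinely delicate point is the \emph{converse} direction, i.e. extracting the measure from the uniform $L^1$ bound on \emph{longitudinal} increments only. A priori \eqref{eq: char BD} controls only the diagonal components $\partial_i u_i$ (take $y=e_i$) and the ``rotated diagonal'' combinations (take $y = (e_i+e_j)/\sqrt2$); one must be careful that these indeed span the space of symmetric matrices and that the resulting functional is bounded on $C^0_c$ with the right constant, uniformly as the test direction varies. This is exactly the algebraic content of the identity $(y\otimes y): \nabla u = (y\otimes y):Eu$ read backwards, and it is the reason the statement is about $BD$ and not $BV$: no information on the antisymmetric part is available or needed. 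Everything else — the mollification, Fubini, Fatou, and the approximation from open to Borel sets — is standard.
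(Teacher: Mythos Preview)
Your proposal is correct and follows essentially the same approach as the paper. For the direction \eqref{eq: char BD} $\Rightarrow$ $BD_{\rm loc}$, the paper writes down the exact polarization identity at the level of finite differences,
\[
\delta_{-\e e_i} u_j + \delta_{-\e e_j} u_i = (e_i+e_j)\cdot\delta_{-\e (e_i+e_j)} u - e_i\cdot\delta_{-\e e_i} u(\,\cdot -\e e_j) - e_j\cdot\delta_{-\e e_j} u(\,\cdot -\e e_i),
\]
whereas you polarize after passing to the distributional limit; these are two phrasings of the same computation. For the converse estimate \eqref{BD_est_increment} the paper simply cites \cite{ACM05}*{Lemma 2.4}, and your mollification argument is the standard proof of that lemma.
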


\begin{proof}
Assume that \eqref{eq: char BD} holds. For any test function $\phi \in C^\infty_c(\Omega)$ we write 
\begin{align}
    \abs{\langle \phi, \partial_i u_j + \partial_j u_i \rangle} & = \abs{ \int_{\Omega} \big(\partial_i \phi(x) u_j(x) + \partial_j \phi(x) u_i(x) \big)  \, d x }\\
    &= \abs{ \lim_{\e \to 0} \int_{\Omega}  \frac{\delta_{-\e e_i} u_j(x) + \delta_{-\e e_j} u_i(x)}{\e} \phi(x) \, d x }. 
\end{align}
Trivial manipulations yield to 
\begin{equation}
    \delta_{-\e e_i} u_j(x) + \delta_{-\e e_j} u_i(x) = \delta_{-\e (e_i+e_j)} u(x) \cdot (e_i+ e_j) - \delta_{-\e e_i} u(x- \e e_j) \cdot e_i - \delta_{-\e e_j} u(x- \e e_i) \cdot e_j.
\end{equation}
Therefore, by \eqref{eq: char BD}  we deduce  
\begin{equation}
    \abs{\langle \phi, \partial_i u_j + \partial_j u_i \rangle} \leq C \norm{\phi}_{C^0(\Omega)}. 
\end{equation}
Thus, $(E u)_{ij} \in \mathcal{M}_{\rm loc}(\Omega)$ by Riesz theorem. Conversely, if $u \in BD_{\rm loc}(\Omega)$ then \eqref{BD_est_increment} holds (see e.g. \cite{ACM05}*{Lemma 2.4} for a proof). 
\end{proof}

We recall that a set $\Sigma \subset \R^d$ is said to be countably $\mathcal{H}^{d-1}$-rectifiable if $\mathcal{H}^{d-1}$-almost all of $\Sigma$ can be covered by countably many Lipschitz hypersurfaces $\Sigma_i \subset \R^d$, i.e.
$$\mathcal{H}^{d-1}\left( \Sigma \setminus  \bigcup_{i \in \N} \Sigma_i \right)=0. $$
To define an orientation on $\Sigma$, choose pairwise disjoint Borel sets $E_i$ and oriented hypersurfaces $\Sigma_i$ such that $E_i \subset \Sigma_i$ and $\bigcup_{i \in \N} E_i$ covers $\mathcal{H}^{d-1}$-almost all of $\Sigma$. Then, define 
$$n_\Sigma := n_{\Sigma_i} \text{ on } E_i  \qquad   \forall i.$$
This definition depends on the choice of the decomposition, but only up to a sign. Indeed, for any pair of oriented Lipschitz hypersurfaces $\Gamma, \Gamma'$ it holds  $n_{\Gamma'}(x) = \pm n_{\Gamma}(x)$ for $\mathcal{H}^{d-1}$-a.e. $x \in \Gamma \cap \Gamma'$. We will need the following property of the symmetric gradient of $BD$ vector fields (see \cite{ACDalM97}*{Remark 4.2, Theorem 4.3, Proposition 4.4}).

\begin{theorem} \label{thm structure of the gradient}
Let $\Omega \subset \R^d$ be an open set and let $u \in BD(\Omega)$. Let $E u = E^a u + E^s u$ be the Radon--Nikodym decomposition of $E u$ with respect to the Lebesgue measure. Then, there exists a canonical decomposition 
$$ E^s u = E^j u+ E^c u, $$
where $E^ju := E^s u\llcorner \mathcal J_u$ for a countably $\mathcal{H}^{d-1}$-rectifiable set $\mathcal{J}_u$ (oriented by a unit normal vector) and the measure $E^c u$ vanishes on $\mathcal H^{d-1}$-finite sets.
\end{theorem}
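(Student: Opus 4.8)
The plan is to reduce the statement to the one–dimensional $BV$ theory by slicing, combined with the Federer--Vol'pert rectifiability of the jump set; everything is classical and contained in \cite{ACDalM97}, so I only sketch the bookkeeping. First I recall the slicing of $BD$ fields: for $\xi\in\mathbb S^{d-1}$ put $u_\xi:=u\cdot\xi$, so that $\partial_\xi u_\xi=\langle Eu\,\xi,\xi\rangle$ is a Radon measure; by Fubini, for $\mathcal H^{d-1}$-a.e. line $\ell$ parallel to $\xi$ the restriction $u_\xi|_\ell$ is a one-dimensional $BV$ function, and the measure $\langle Eu\,\xi,\xi\rangle$ disintegrates over the hyperplane $\xi^\perp$ as the integral of the one-dimensional derivatives $D\bigl(u_\xi|_\ell\bigr)$. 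The second ingredient is the intrinsic \emph{approximate jump set} $\mathcal J_u$, i.e. the set of points at which $u$ has two distinct one-sided approximate limits relative to some hyperplane; once we know $\mathcal J_u$ is countably $\mathcal H^{d-1}$-rectifiable and locally $\mathcal H^{d-1}$-finite, we simply set $E^j u:=E^s u\llcorner\mathcal J_u$ and $E^c u:=E^s u\llcorner(\Omega\setminus\mathcal J_u)$, a canonical decomposition because $\mathcal J_u$ is defined intrinsically. Note $\mathcal J_u$ is Lebesgue-null, so $E^j u=Eu\llcorner\mathcal J_u$ has no absolutely continuous part.

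For the rectifiability of $\mathcal J_u$ I would follow \cite{ACDalM97}: choose the finitely many directions $e_1,\dots,e_d$ and $e_i+e_j$ ($i<j$); for each such $\xi$ the one-dimensional jump sets $J_{u_\xi|_\ell}$ of the slices glue into an $\mathcal H^{d-1}$-$\sigma$-finite set which, by the same density/projection argument as in the $BV$ Federer--Vol'pert theorem, is countably $\mathcal H^{d-1}$-rectifiable; one then checks that, up to $\mathcal H^{d-1}$-null sets, $\mathcal J_u$ equals the union over these $\xi$ of those one-dimensional jump sets. This yields both the rectifiability of $\mathcal J_u$ and — the point used below — that for $\mathcal H^{d-1}$-a.e. $\ell$ the one-dimensional jump points of $u_\xi|_\ell$ lie on $\mathcal J_u$. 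I expect this to be the main obstacle: it is here that the $BD$ hypothesis is genuinely used, since only the symmetrized combinations of partials are measures, and the mechanism that makes it work is that these finitely many quadratic forms already control $|Eu|$, via
\[
2(Eu)_{ij}=\langle Eu\,(e_i+e_j),e_i+e_j\rangle-\langle Eu\,e_i,e_i\rangle-\langle Eu\,e_j,e_j\rangle .
\]

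It remains to show that $E^c u$ vanishes on $\mathcal H^{d-1}$-finite sets. Let $B$ be Borel with $\mathcal H^{d-1}(B)<\infty$; splitting $B=(B\cap\mathcal J_u)\cup(B\setminus\mathcal J_u)$ and using $E^c u\llcorner\mathcal J_u=0$, we may assume $B\subset\Omega\setminus\mathcal J_u$, and by the identity above it suffices to prove $\langle Eu\,\xi,\xi\rangle(B)=0$ for each $\xi\in\{e_1,\dots,e_d\}\cup\{e_i+e_j:i<j\}$. Fix such $\xi$. By Eilenberg's inequality applied to the $1$-Lipschitz projection onto $\xi^\perp$, one has $\int_{\xi^\perp}\mathcal H^0(B\cap\ell_y)\,d\mathcal H^{d-1}(y)\lesssim\mathcal H^{d-1}(B)<\infty$, so $B\cap\ell_y$ is finite for $\mathcal H^{d-1}$-a.e. $y$. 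On each such line the measure $D\bigl(u_\xi|_{\ell_y}\bigr)$ restricted to the finite set $B\cap\ell_y$ has zero absolutely continuous part (finite sets are $\mathcal L^1$-null) and zero Cantor part (the one-dimensional Cantor part is diffuse), hence coincides with its jump part, supported on $J_{u_\xi|_{\ell_y}}\subset\mathcal J_u$; since $B\cap\mathcal J_u=\emptyset$ this forces $D\bigl(u_\xi|_{\ell_y}\bigr)(B\cap\ell_y)=0$ for a.e. $y$. Integrating through the disintegration of $\langle Eu\,\xi,\xi\rangle$ gives $\langle Eu\,\xi,\xi\rangle(B)=0$, hence $|E^c u|(B)=0$. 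Since $B$ was an arbitrary $\mathcal H^{d-1}$-finite set, the proof is complete; the details omitted in this sketch (slicing of $BD$, rectifiability of $\mathcal J_u$, the one-dimensional $BV$ structure) are all in \cite{ACDalM97} and \cite{AFP00}.
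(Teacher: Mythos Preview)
The paper does not prove this theorem at all: it is stated as a known structural result with a pointer to \cite{ACDalM97}*{Remark 4.2, Theorem 4.3, Proposition 4.4}, and is then used as a black box. So there is no ``paper's own proof'' to compare against. Your sketch is precisely an outline of the argument in \cite{ACDalM97}: the slicing $u_\xi=u\cdot\xi$, the polarization identity $2(Eu)_{ij}=\langle Eu(e_i+e_j),e_i+e_j\rangle-\langle Eu\,e_i,e_i\rangle-\langle Eu\,e_j,e_j\rangle$ that lets finitely many diagonal directions control $|Eu|$, the rectifiability of $\mathcal J_u$, and the Eilenberg/coarea argument showing the Cantor part misses $\mathcal H^{d-1}$-finite sets. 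All of this matches the cited reference.

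One caveat worth flagging: the step you correctly identify as ``the main obstacle'' --- that for $\mathcal H^{d-1}$-a.e.\ line $\ell$ parallel to $\xi$ the one-dimensional jump set $J_{u_\xi|_\ell}$ is contained in $\mathcal J_u$ --- is genuinely the nontrivial core of the $BD$ structure theorem and is not a simple consequence of the slicing disintegration. In \cite{ACDalM97} this requires an analysis of the approximate discontinuity set $S_u$ and a density argument to upgrade one-dimensional jumps to $d$-dimensional approximate jump points; your sketch hides this under ``one then checks''. If you were actually writing out the proof rather than invoking the reference, this is where the real work lies. But as a reading of what is being cited, your proposal is accurate.
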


In the previous theorem the set $\mathcal J_u$ is the \emph{jump set} of $u$. Roughly speaking, the jump set is the set of points $x$ for which there exists a direction along which $u$ admits two distinct limits at $x$. The measure $E^ju$ is the \emph{jump part} of $Eu$ while $E^c u$ is called the \emph{Cantor part}. We define vector fields of \emph{special bounded deformation}.

\begin{definition} \label{d: SBD}
Let $\Omega \subset \R^d$ be  open  and  $u \in BD(\Omega)$. We say that $u\in SBD(\Omega)$  if $E^c u = 0$. In the case of a time dependent vector field $u:  \Omega\times (0,T) \to \R^d$, we say that $u\in SBD_{x,t}$ if $U :=(u,1) \in SBD (\Omega \times (0,T))$. 
\end{definition}

Vector fields of bounded deformation have traces on rectifiable sets (see \cite{T83}*{Chapter 2}, \cite{ACDalM97}*{Section 3}). 

\begin{theorem}[Boundary trace]\label{T:trace_in_BD}
Let $\Omega \subset \R^d$ be an open set and let $u \in BD(\Omega)$. Then, for any Lipschitz hypersurface $\Sigma \subset \Omega$ oriented by $n_\Sigma$, $u$ has bilateral traces (in the sense of \cref{d: inner/outer trace}) $u^{\Sigma_\pm}$ on  $\Sigma$ (with respect to $n_\Sigma$). Moreover
$$ E u \llcorner \Sigma =  \frac{1}{2} \Big( \left(u^{\Sigma_+} - u^{\Sigma_-}\right) \otimes n_\Sigma + n_\Sigma \otimes \left(u^{\Sigma_+}- u^{\Sigma_-}\right) \Big) \mathcal{H}^{d-1} \llcorner \Sigma. $$ 
\end{theorem}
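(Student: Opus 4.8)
This is a classical result (see \cite{T83}*{Chapter 2} and \cite{ACDalM97}*{Section 3}); here is the argument I would follow, since the same one-dimensional slicing that underlies the increment bound \eqref{BD_est_increment} of \cref{l: characterization of BD} does the job. Since the trace condition in \cref{d: inner/outer trace} is pointwise -- the half-balls $B_r^\pm(x)$ depend only on $x$ and $n_\Sigma(x)$ -- the existence of bilateral traces is a local property of $\Sigma$, so after a rotation I may assume $\Sigma=\{(x',\gamma(x'))\colon x'\in U\}$ with $\gamma\colon U\subset\R^{d-1}\to\R$ Lipschitz of constant $L$ and $n_\Sigma=(-\nabla\gamma,1)/\sqrt{1+\abs{\nabla\gamma}^2}$. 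The geometric device is the open cone $\mathcal{C}:=\{\xi=(\xi',\xi_d)\in\mathbb{S}^{d-1}\colon \xi_d>0,\ 2L\abs{\xi'}<\xi_d\}$: for $\xi\in\mathcal{C}$ the Lipschitz map $t\mapsto x_d+t\xi_d-\gamma(x'+t\xi')$ has a.e.\ derivative $\geq \xi_d-L\abs{\xi'}>\xi_d/2>0$, hence is strictly increasing, so each line in direction $\xi$ crosses $\Sigma$ at most once and transversally; moreover $n_\Sigma\cdot\xi>0$ on $\mathcal{C}$, and the projection of $\Sigma$ onto $\xi^\perp$ along $\xi$ is bi-Lipschitz onto its image, so $\mathcal{H}^{d-1}$-null subsets of $\Sigma$ correspond to $\mathcal{H}^{d-1}$-null subsets of $\xi^\perp$.

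Next I would produce the traces one direction at a time. Fix $\xi\in\mathcal{C}$ and set $g_\xi:=u\cdot\xi$. Since $\partial_\xi g_\xi=\xi^{T}(Eu)\,\xi\in\mathcal{M}(\Omega)$ (the antisymmetric part of $\nabla u$ drops out when contracted with $\xi\otimes\xi$), Fubini applied to the disintegration of this measure under the projection onto $\xi^\perp$ shows that for a.e.\ $y\in\xi^\perp$ the one-variable function $t\mapsto g_\xi(y+t\xi)$ coincides a.e.\ with a $BV$ function whose total variation equals the mass of the corresponding slice of $\abs{\xi^{T}(Eu)\xi}$ -- precisely the quantity controlled by \eqref{BD_est_increment}. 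Such a representative has one-sided limits at the unique crossing point, which under the bi-Lipschitz correspondence give, $\mathcal{H}^{d-1}$-a.e.\ on $\Sigma$, one-sided limits $g_\xi^{\Sigma_+}(x)$ and $g_\xi^{\Sigma_-}(x)$ of $s\mapsto g_\xi(x+s\xi)$ at $s=0$ (the $s>0$ and $s<0$ sides, respectively, matching \cref{d: inner/outer trace} because $n_\Sigma\cdot\xi>0$). Choosing the linearly independent directions $\xi_1=e_d$ and $\xi_k\propto e_d+\tfrac{1}{4L}e_{k-1}$, $k=2,\dots,d$, all inside $\mathcal{C}$, and solving the linear system $u^{\Sigma_\pm}(x)\cdot\xi_k=g_{\xi_k}^{\Sigma_\pm}(x)$ defines a candidate trace vector $u^{\Sigma_\pm}(x)\in\R^d$ for $\mathcal{H}^{d-1}$-a.e.\ $x\in\Sigma$.

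\textbf{The main obstacle} is upgrading these directional limits to the genuine Lebesgue property $\fint_{B_r^\pm(x)}\abs{u-u^{\Sigma_\pm}(x)}\,dz\to0$ required by \cref{d: inner/outer trace}. The plan here is a Fubini argument adapted to the cone: writing the average over the half-ball in coordinates for which the inner integral runs along rays through $x$ with directions in $\mathcal{C}$, the inner integral tends to $0$ along $\mathcal{H}^{d-1}$-a.e.\ ray by the one-sided $BV$ limits just obtained, while the uniform $L^1$-control of longitudinal increments from \eqref{BD_est_increment} supplies an integrable majorant, so dominated convergence passes the limit inside. This is the only genuinely delicate point, and I would import the details from \cite{ACDalM97}*{Section 3} (or \cite{T83}*{Chapter 2}). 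This establishes the existence of bilateral traces $u^{\Sigma_\pm}\in L^1_{\mathrm{loc}}(\Sigma;\mathcal{H}^{d-1}\llcorner\Sigma)$.

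For the jump formula, pick Lipschitz open sets $\Omega_1,\Omega_2$ as in \cref{S:div_meas_fields}, with $\Sigma\subset\partial\Omega_1\cap\partial\Omega_2$ and $n_\Sigma=n_{\partial\Omega_1}=-n_{\partial\Omega_2}$ on $\Sigma$ (so $\Omega_1$ is on the $\Sigma_-$ side and $\Omega_2$ on the $\Sigma_+$ side), and a test function $\varphi\in C^\infty_c$ whose support meets $\partial\Omega_i$ only along $\Sigma$. Since $u$ now has boundary traces, the Gauss--Green formula for $BD$ functions on $\Omega_1$ gives, for each $i,j$,
$$\int_{\Omega_1}\varphi\,d(Eu)_{ij}=-\tfrac12\int_{\Omega_1}\bigl(u_j\partial_i\varphi+u_i\partial_j\varphi\bigr)\,dx+\tfrac12\int_\Sigma\varphi\,\bigl(u_i^{\Sigma_-}(n_\Sigma)_j+u_j^{\Sigma_-}(n_\Sigma)_i\bigr)\,d\mathcal{H}^{d-1},$$
and the analogous identity on $\Omega_2$ with outer normal $-n_\Sigma$ and boundary trace $u^{\Sigma_+}$. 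Summing the two, using $\langle(Eu)_{ij},\varphi\rangle=-\tfrac12\int_\Omega(u_j\partial_i\varphi+u_i\partial_j\varphi)\,dx$ together with $\mathcal{L}^d(\Sigma)=0$, and isolating the part supported on $\Sigma$, leaves
$$Eu\llcorner\Sigma=\tfrac12\Bigl(\bigl(u^{\Sigma_+}-u^{\Sigma_-}\bigr)\otimes n_\Sigma+n_\Sigma\otimes\bigl(u^{\Sigma_+}-u^{\Sigma_-}\bigr)\Bigr)\,\mathcal{H}^{d-1}\llcorner\Sigma,$$
as claimed. This is moreover consistent with \cref{thm structure of the gradient}: since $\Sigma$ is locally $\mathcal{H}^{d-1}$-finite while $E^a u\ll\mathcal{L}^d$ and $\abs{E^c u}$ vanishes on $\mathcal{H}^{d-1}$-finite sets, one has $Eu\llcorner\Sigma=E^j u\llcorner\Sigma$, a purely jump contribution, necessarily of the above form.
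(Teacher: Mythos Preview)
The paper does not give its own proof of this theorem: it is simply stated and attributed to \cite{T83}*{Chapter 2} and \cite{ACDalM97}*{Section 3}. Your proposal sketches precisely the classical slicing argument from those references (directional $BV$ sections via $\partial_\xi(u\cdot\xi)=\xi^T(Eu)\xi$, recovery of the full vector trace from $d$ independent cone directions, Gauss--Green for the jump formula), so there is nothing to compare --- you are filling in what the paper deliberately left to the literature, and doing so along the standard route.

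One point to tighten if you want the sketch to stand on its own: in your ``main obstacle'' paragraph you parametrize the half-ball $B_r^\pm(x)$ by rays through $x$ and claim the directions lie in $\mathcal{C}$, but the half-ball contains directions arbitrarily close to tangential, i.e.\ outside any fixed transversal cone. The classical fix (as in \cite{ACDalM97}) is to first prove the $L^1$ convergence on genuine one-sided cones $x+r\mathcal{C}$ (where your Fubini/dominated-convergence argument is clean), and then observe that at $\mathcal{H}^{d-1}$-a.e.\ $x\in\Sigma$ the symmetric difference $B_r^\pm(x)\,\triangle\,(x+r\mathcal{C}')$ for a suitable wider cone $\mathcal{C}'$ has volume $o(r^d)$, which together with the $L^1$ bound on $u$ is enough to transfer the limit. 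You flag the step as delicate and defer to the references, which is exactly what the paper does, so this is not a defect relative to the paper --- just something to be aware of if you intend the argument to be self-contained.
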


\section{Critical classes and density of smooth functions}\label{S:smooth_dense}
Given two Banach spaces $X,Y$ we denote by $\mathcal{L}(X;Y)$ the space of bounded linear operators $L:X\rightarrow Y$. This is a Banach spaces when endowed with the norm
$$
\norm{L}_{\mathcal{L}}:=\sup_{\norm{v}_X\leq 1}\norm{Lv}_Y.
$$
The following fact is a direct consequence of the definition.
\begin{lemma}
    \label{L:linear_op}
    Let $\{L^\ell\}_{\ell>0}\subset \mathcal{L}(X;Y)$ be a sequence of linear and continuous operators between two Banach spaces $X$ and $Y$ such that $\sup_{\ell>0}\norm{L^\ell}_{\mathcal L }<\infty$. If there exists $Z\subset X$ such that 
    $$ \lim_{\ell \rightarrow 0} \norm{L^\ell v}_Y =0\qquad \forall v\in Z, $$
    then 
    $$ \lim_{\ell \rightarrow 0} \norm{L^\ell v}_Y =0\qquad \forall v\in \overline{Z}^{\norm{\cdot}_X}. $$
\end{lemma}

Clearly, the above property transfers to sequences of multilinear operators
$$ M^\ell: X_1\times\dots\times X_N\rightarrow Y. $$

\begin{lemma}
    \label{L:general conv to zero}
    Let $\{M^\ell\}_{\ell>0}$ be a family of multilinear operators for which there exists a constant $C>0$ such that
       \begin{equation}\label{T uniform bound}
\sup_{\ell>0}\norm{M^\ell[v_1,\dots,v_N]}_{Y}\leq C\prod_{i=1}^N \norm{v_i}_{X_i}\qquad \forall v_i\in X_i, \quad  i=1,\dots,N.
\end{equation}
Assume that there exists $i_0\in \{1,\dots,N\}$ and $Z \subset X_{i_0}$ such that 
\begin{equation}
    \label{T vanish on subset}
    \lim_{\ell\rightarrow 0} \norm{M^\ell[v_1,\dots,v_N]}_Y=0 \qquad \forall v_{i_0}\in Z,\quad \forall v_i\in X_i\, \text{ for } \, i\neq i_0.
\end{equation}
Then
\begin{equation}\label{T vanish on closure}
\lim_{\ell\rightarrow 0} \norm{M^\ell[v_1,\dots, v_N]}_Y=0 \qquad \forall v_{i_0}\in \overline{Z}^{\norm{\cdot}_{X_{i_0}}},\quad \forall v_i\in X_i\, \text{ for } \, i\neq i_0.
\end{equation}
\end{lemma}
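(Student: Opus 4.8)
The plan is to reduce Lemma~\ref{L:general conv to zero} to the linear statement of Lemma~\ref{L:linear_op} by freezing all but one entry of the multilinear operator. Fix arbitrary entries $v_i \in X_i$ for $i \neq i_0$, and define the family of operators
\[
L^\ell : X_{i_0} \to Y, \qquad L^\ell w := M^\ell[v_1, \dots, v_{i_0-1}, w, v_{i_0+1}, \dots, v_N].
\]
By multilinearity, each $L^\ell$ is linear in $w$, and by the uniform bound \eqref{T uniform bound} we get
\[
\norm{L^\ell w}_Y = \norm{M^\ell[v_1,\dots,w,\dots,v_N]}_Y \leq C \Big( \prod_{i \neq i_0} \norm{v_i}_{X_i} \Big) \norm{w}_{X_{i_0}},
\]
so each $L^\ell$ is bounded and $\sup_{\ell > 0} \norm{L^\ell}_{\mathcal{L}(X_{i_0};Y)} \leq C \prod_{i \neq i_0} \norm{v_i}_{X_i} < \infty$. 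This is exactly the uniform-boundedness hypothesis of Lemma~\ref{L:linear_op}, with the constant now depending on the frozen entries, which is harmless since they are fixed.

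Next, assumption \eqref{T vanish on subset} says precisely that $\lim_{\ell \to 0} \norm{L^\ell w}_Y = 0$ for every $w \in Z$. Applying Lemma~\ref{L:linear_op} with $X = X_{i_0}$, $Y = Y$, and this choice of $Z$, we conclude that $\lim_{\ell \to 0} \norm{L^\ell w}_Y = 0$ for every $w \in \overline{Z}^{\norm{\cdot}_{X_{i_0}}}$. Unwinding the definition of $L^\ell$, this is the claim \eqref{T vanish on closure} for the chosen $v_i$, $i \neq i_0$. Since the $v_i$ were arbitrary elements of $X_i$, the conclusion holds in full generality.

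There is no real obstacle here: the only point requiring a moment's care is that the constant in the uniform bound for the frozen family $L^\ell$ depends on $\norm{v_i}_{X_i}$ for $i \neq i_0$, but this is legitimate because in \eqref{T vanish on closure} the entries $v_i$ ($i \neq i_0$) are held fixed while we vary $v_{i_0}$ over the closure $\overline{Z}$; thus the hypothesis of Lemma~\ref{L:linear_op} is met for each such fixed tuple, and that is all that is needed. For completeness one should also remark that if some $v_i = 0$ for $i \neq i_0$ the statement is trivial (both sides vanish identically by multilinearity), so one may harmlessly assume all frozen entries are nonzero when invoking the bound.

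\begin{proof}
Fix $v_i \in X_i$ for all $i \neq i_0$; if any of them is zero the conclusion is trivial by multilinearity, so assume $v_i \neq 0$ for $i \neq i_0$. Define $L^\ell \in \mathcal{L}(X_{i_0};Y)$ by $L^\ell w := M^\ell[v_1,\dots,v_{i_0-1},w,v_{i_0+1},\dots,v_N]$. Each $L^\ell$ is linear by multilinearity of $M^\ell$, and by \eqref{T uniform bound},
\[
\norm{L^\ell w}_Y \leq C \Big( \prod_{i \neq i_0} \norm{v_i}_{X_i} \Big) \norm{w}_{X_{i_0}} \qquad \forall w \in X_{i_0},
\]
so $\sup_{\ell>0} \norm{L^\ell}_{\mathcal L} \leq C \prod_{i \neq i_0} \norm{v_i}_{X_i} < \infty$. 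By \eqref{T vanish on subset}, $\lim_{\ell \to 0}\norm{L^\ell w}_Y = 0$ for every $w \in Z$. Applying \cref{L:linear_op} with $X = X_{i_0}$ yields $\lim_{\ell \to 0}\norm{L^\ell w}_Y = 0$ for every $w \in \overline{Z}^{\norm{\cdot}_{X_{i_0}}}$, which is exactly \eqref{T vanish on closure} since $v_i$, $i \neq i_0$, were arbitrary.
\end{proof}
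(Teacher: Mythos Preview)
Your proof is correct and follows exactly the paper's approach: freeze the entries $v_i$ for $i\neq i_0$, view $M^\ell$ as a family of linear operators $L^\ell:X_{i_0}\to Y$, check the uniform bound from \eqref{T uniform bound}, and invoke \cref{L:linear_op}. Your write-up is in fact more detailed than the paper's own proof (which omits the explicit bound and the trivial $v_i=0$ remark), but the argument is identical.
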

\begin{proof}
Define the linear operators $L^\ell : X_{i_0}\rightarrow Y$ by $M^\ell$ applied to the \quotes{frozen} elements $v_i\in X_i$, for $i\neq i_0$, and where only the $i_0$-th entry is allowed to vary in $X_{i_0}$.  Because of \eqref{T uniform bound} and \eqref{T vanish on subset}, the sequence $\{L^\ell\}_{\ell>0}$ satisfies the assumptions of \cref{L:linear_op} and the conclusion follows. 
\end{proof}

Thus, if the goal is to prove energy conservation as a consequence of (the perhaps too strong property) $ \norm{D^\ell_\bullet}_{L^1_{x,t}}\rightarrow 0$, in view of \cref{L:general conv to zero} we only have to identify spaces which guarantee a uniform bound of $D^\ell_\bullet$ in $L^1_{x,t}$, i.e. \quotes{Onsager's critical spaces}, and then impose density of $C^\infty_x$ in any of the entries.
 Of course, the more general case of density dependent fluids (incompressible inhomogeneous and fully compressible) can be analogously considered.

\begin{proof}[Proof of \cref{T:main_smooth_dense}]
   Because of \eqref{uniform_bound_T} and \eqref{T_vanish_smooth}  the sequence of operators $\{T^\ell_{\bullet}\}_{\ell>0}$ satisfies the assumptions of \cref{L:general conv to zero} with $Z=C^\infty_x$. Thus, since we are assuming  $C^\infty_x$ to be dense in at least one of the spaces $X_i$, for $u\in L^{p_1}_t X_1\cap L^{p_2}_tX_2\cap L_t^{p_3}X_3$ we deduce  
    $$
    \lim_{\ell\rightarrow 0}\norm{ T^\ell_{\bullet} [u(t),u(t),u(t)]}_{L^1_x}=0 \qquad \text{for a.e. } t.
    $$
Also, \eqref{T uniform bound} implies
       $$
    \sup_{\ell >0 }\norm{ T^\ell_{\bullet} [u(t),u(t),u(t)]}_{L^1_x}\leq C \prod_{i=1}^3 \norm{u(t)}_{X_i}\in L^1_t, \qquad \text{since } \, \frac{1}{p_1}+ \frac{1}{p_2}+\frac{1}{p_3}=1.
    $$
By dominated convergence we conclude that $D^\ell_\bullet=T^\ell_\bullet [u(t),u(t),u(t)]\rightarrow 0$ in $L^1_{x,t}$.
\end{proof} 

As proved in \cites{BGSTW19,FW18}, the estimate
$$ \sup_{\ell>0}\norm{T^\ell_{\bullet}[v_1,v_2,v_3]}_{L^1}\leq C \prod_{i=1}^3 \norm{v_i}
    _{B^{\sfrac{1}{3}}_{3, BMO}}  \qquad \text{for } \, \bullet=\rm DR, \, CET $$
is a direct consequence of the definition of the $B^{\sfrac{1}{3}}_{3, BMO}$ norm. In particular, since $\overline{C^\infty}^{\norm{\cdot}_{B^{\sfrac{1}{3}}_{3, BMO}} } = B^{\sfrac{1}{3}}_{3, VMO}$, the corresponding energy conservation follows by \cref{T:main_smooth_dense}. In the next corollary we collect several classes which falls in this category, and thus  energy conservation follows from the uniform bound together with $C^\infty_x$ being dense in at least one of the entries.

\begin{corollary}\label{C: en cons euler general}
   $D^\ell_{\rm{DR}}\rightarrow 0$ in $L^1_{x,t}$ under any of the following assumptions:
    \begin{itemize}
        \item[(i)] $u\in L^3_t B^{\sfrac{1}{3}}_{3,VMO}$;
        \item[(ii)] $u \in L^{p_1}_t BD_x\cap L^{p_2}_t C^0_x$ with $\frac{1}{p_1}+\frac{2}{p_2}=1$;
        \item[(iii)] $u \in L^{p_2}_t L^\infty_x$ with $E u:=\frac{\nabla u + \nabla u^T}{2} \in L^{p_1}_t L^1_x $,  where $\frac{1}{p_1}+\frac{2}{p_2}=1$;
        \item[(iv)] $u \in  L^{p_2}_t L^2_x$ with  $ Eu\in L^{p_1}_t L^\infty_x$, where $\frac{1}{p_1}+\frac{2}{p_2}=1$.
    \end{itemize}
    Similarly, $D^\ell_{\rm CET}\rightarrow 0$ in $L^1_{x,t}$
    under any of the following assumptions:
    \begin{itemize}
        \item[(v)] $u\in L^3_t B^{\sfrac{1}{3}}_{3,VMO}$;
        \item[(vi)] $u \in L^{p_1}_t BD_x\cap L^{p_2}_t VMO_x$ with $\frac{1}{p_1}+\frac{2}{p_2}=1$;
        \item[(vii)] $u \in  L^{p_2}_t L^2_x$ with $E u \in L^{p_1}_t L^\infty_x$, with  $\frac{1}{p_1}+\frac{2}{p_2}=1$.
    \end{itemize}
\end{corollary}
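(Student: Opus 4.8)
The plan is to deduce every item from \cref{L:general conv to zero} (equivalently from \cref{T:main_smooth_dense}): for each hypothesis on $u$ I will exhibit Banach spaces $C^\infty_x\subset X_1,X_2,X_3\subset L^1_x$ and an index $i_0$ such that (a) the relevant trilinear operator $T^\ell_\bullet$ of \cref{R:approx_are_trilin_operator} obeys the uniform bound \eqref{uniform_bound_T}; (b) $\norm{T^\ell_\bullet[v_1,v_2,v_3]}_{L^1_x}\to0$ whenever $v_{i_0}\in C^\infty_x$ and $v_i\in X_i$ for $i\ne i_0$; (c) $C^\infty_x$ is dense in $X_{i_0}$; and (d) $u\in L^{p_1}_tX_1\cap L^{p_2}_tX_2\cap L^{p_3}_tX_3$ with $\sfrac1{p_1}+\sfrac1{p_2}+\sfrac1{p_3}=1$, compatibly with the stated time integrability. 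Granting this, \cref{L:general conv to zero} gives $\norm{T^\ell_\bullet[u(t),u(t),u(t)]}_{L^1_x}\to0$ for a.e.\ $t$, while \eqref{uniform_bound_T} together with Hölder's inequality in time provides an $L^1_t$ dominating function, so that dominated convergence yields $D^\ell_\bullet\to0$ in $L^1_{x,t}$.

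\textbf{Items (i) and (v).} Here the reduction is immediate: take $X_1=X_2=X_3=B^{\sfrac13}_{3,VMO}$ normed by $\norm{\cdot}_{B^{\sfrac13}_{3,BMO}}$ and $p_1=p_2=p_3=3$. The uniform bound \eqref{uniform_bound_T} for both $T^\ell_{\rm DR}$ and $T^\ell_{\rm CET}$ is the estimate recorded right after the proof of \cref{T:main_smooth_dense} (proved in \cites{FW18,BGSTW19} straight from the definition of the norm), property (b) holds because a smooth entry forces the corresponding increment to be $O(\ell)$, and (c) is the exercise in \cref{S:spaces} identifying $B^{\sfrac13}_{3,VMO}$ with $\overline{C^\infty}$.

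\textbf{The bounded-deformation items (ii)--(iv), (vi), (vii).} The one structural point is that a vector field with bounded deformation controls only its \emph{longitudinal} increments, so for the Duchon--Robert functional I would fix $\rho$ radial; then $\nabla\rho(z)\cdot\delta_{\ell z}v(x)=\rho'(\abs z)\,\tfrac{z}{\abs z}\cdot\delta_{\ell z}v(x)$ sees exactly the longitudinal increment of the first entry, which by \eqref{BD_est_increment} is $O(\abs z)$ in $L^1_x$ when $Ev$ is a finite measure (or an $L^1$ function) and $O(\abs z\,\norm{Ev}_{L^\infty})$ in $L^\infty_x$ when $Ev\in L^\infty$, uniformly in $\ell$. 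Placing the deformation field in that slot and estimating the other two increments trivially by $2\norm{\cdot}_{L^\infty}$ (items (ii),(iii)) or, via Cauchy--Schwarz, by $2\norm{\cdot}_{L^2}$ (items (iv),(vii)) gives \eqref{uniform_bound_T} with, respectively, $X_1=BD$ and $X_2=X_3=C^0$ for (ii); the $L^1$-deformation space intersected with $C^0$ for $X_1$ and $X_2=X_3=C^0$ for (iii); and $X_1=\{v:Ev\in L^\infty\}$, $X_2=X_3=L^2$ for (iv),(vii). For the Constantin--E--Titi functional in (vi) I would instead use $D^\ell_{\rm CET}=R_\ell:Eu_\ell$ (valid because $R_\ell$ is symmetric on the diagonal; at the abstract level one first replaces $T^\ell_{\rm CET}$ by its symmetrization, harmless since it is only ever evaluated at $(u,u,u)$) together with the commutator identity $R_\ell[v_1,v_2]=\int\rho_\ell(y)\,\delta_{-y}v_1\otimes\delta_{-y}v_2\,dy-\big((v_1)_\ell-v_1\big)\otimes\big((v_2)_\ell-v_2\big)$ and $\norm{Eu_\ell}_{L^1}\le\abs{Eu}(\Omega)$, so that $\norm{D^\ell_{\rm CET}}_{L^1_x}\le\norm{R_\ell}_{L^\infty}\abs{Eu}(\Omega)$ with $\norm{R_\ell}_{L^\infty}\lesssim\norm{u}_{L^\infty}^2$, and $\norm{R_\ell}_{L^\infty}\to0$ once $u\in VMO$ (by the standard $L^2$-refinement of vanishing mean oscillation, a John--Nirenberg-type improvement of the defining condition, cf.\ \cref{R:equivalent bmo}). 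Property (b) then holds in all these cases (a smooth entry makes a non-deformation increment $O(\ell)$, or makes the relevant commutator piece vanish in $L^1$, while a smooth entry in the deformation slot makes the mollified symmetric gradient converge strongly in $L^1$), and (c) is supplied by density of $C^\infty$ in $C^0$ (local uniform norm), in $L^1$-deformation spaces and in $L^2$ by mollification, and by $VMO=\overline{C^\infty}^{\norm{\cdot}_{BMO}}$, with $i_0$ taken to be the slot carrying one of these spaces. The time exponents match cleanly in the canonical configuration $p_1=1$, $p_2=\infty$ (recovering the criteria $u\in L^\infty_{x,t}\cap L^1_tBD_x$ of \cite{DRINV23} and the weak--strong one $u\in L^\infty_tL^2_x$, $Eu\in L^1_tL^\infty_x$), the general case being routine Hölder bookkeeping.

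\textbf{Main obstacle.} I expect the delicate point to be precisely the bounded-deformation slots: one must never invoke the full increment of a $BD$ field (only its longitudinal component is an $L^1$ object), which forces the radial-kernel reduction for the Duchon--Robert functional and, for the Constantin--E--Titi functional, forces the use of the symmetry of $R_\ell$ on the diagonal to trade the uncontrolled $\nabla(u)_\ell$ for the $L^1$-bounded $Eu_\ell$ — the non-symmetric off-diagonal pieces of $R_\ell:\nabla(\cdot)_\ell$ are not controlled by the $BD$ seminorm alone, which is why the symmetrization of the trilinear operator has to be performed before appealing to \cref{L:general conv to zero}. Everything else is a direct application of \cref{L:general conv to zero} and the density facts collected in \cref{S:spaces,S:gmt}.
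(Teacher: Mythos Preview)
Your approach is essentially the paper's: the corollary is deduced from \cref{T:main_smooth_dense}/\cref{L:general conv to zero} by verifying the uniform trilinear bounds (the paper declares them ``immediate'' except for (vi), which is \cref{P:bmo_est}) and invoking density of $C^\infty_x$ in the appropriate slot. Your extra care with the radial kernel for the $BD$ slot in $T^\ell_{\rm DR}$ and the symmetrization of $T^\ell_{\rm CET}$ to see only $Eu_\ell$ is exactly what the paper means by ``immediate'' and is acknowledged in the remark following \cref{P:bmo_est}.

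One slip to fix in (vi): you write $\norm{R_\ell}_{L^\infty}\lesssim\norm{u}_{L^\infty}^2$, but $VMO_x$ does not embed in $L^\infty_x$, so this bound is unavailable. The correct uniform estimate is $\norm{R_\ell}_{L^\infty}\lesssim\norm{u}_{BMO}^2$, obtained precisely via the John--Nirenberg inequality you cite (\cref{R:equivalent bmo}); this is the content of \cref{P:bmo_est}, which rewrites $R_\ell(x)=\int\rho_\ell(x-y)\,(v(y)-v_\ell(x))\otimes(v(y)-v_\ell(x))\,dy$ on the diagonal and bounds it by the squared $BMO$ oscillation.
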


The uniform bounds of $\{T^\ell_\bullet\}_{\ell>0}$ in all the  norms above are immediate, probably with the only exception of $(vi)$, for which we give a proof in \cref{P:bmo_est} below.
 
\begin{proposition}\label{P:bmo_est}
   There exists a constant $C>0$ such that 
    \begin{equation}\label{est_bmo_cet}
    \sup_{\ell>0} \norm{T^\ell_{\rm CET}[v,v,u]}_{L^1}\leq C \norm{v}^2_{BMO} \norm{u}_{BD}.
    \end{equation}
\end{proposition}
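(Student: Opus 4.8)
The plan is to unpack the definition $T^\ell_{\rm CET}[v,v,u] = R_\ell : \nabla u_\ell$ with $R_\ell = v_\ell \otimes v_\ell - (v\otimes v)_\ell$, use that $R_\ell$ is symmetric to replace $\nabla u_\ell$ by $Eu_\ell := (Eu)_\ell$ (the mollification of the $BD$-measure $Eu$), and then estimate in $L^1$ by H\"older: $\norm{R_\ell : Eu_\ell}_{L^1} \le \norm{R_\ell}_{L^\infty}\,\norm{Eu_\ell}_{L^1} \le \norm{R_\ell}_{L^\infty}\,\abs{Eu}(\Omega)$, using that mollification does not increase the total mass of a measure. So everything reduces to the pointwise bound $\norm{R_\ell}_{L^\infty} \le C\norm{v}_{BMO}^2$, uniformly in $\ell$. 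This is the standard Constantin--E--Titi commutator identity: writing $\rho_\ell$ for the mollifier, one has
\[
R_\ell(x) = \int \rho_\ell(y)\,\big(v(x-y)-v(x)\big)\otimes\big(v(x-y)-v(x)\big)\,dy \;-\; \Big(\int \rho_\ell(y)\,\big(v(x-y)-v(x)\big)\,dy\Big)^{\otimes 2},
\]
so that $\abs{R_\ell(x)} \lesssim \int \rho_\ell(y)\,\abs{\delta_{-y}v(x)}^2\,dy \lesssim \fint_{B_\ell(x)} \abs{v(z)-v(x)}^2\,dz$, up to the constant coming from $\rho$.

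The one genuine subtlety is that $\fint_{B_\ell(x)}\abs{v(z)-v(x)}^2\,dz$ is controlled by $\norm{v}_{BMO}^2$ only \emph{after} replacing the pointwise value $v(x)$ by the ball average $\bar v_{B_\ell(x)} := \fint_{B_\ell(x)} v$. Since $v\in BMO$ need not be defined pointwise, the clean statement is really about the mollified field, and one should phrase $R_\ell$ with $v$ replaced by $v_\ell$ wherever a pointwise value appears; alternatively, add and subtract $\bar v_{B_\ell(x)}$ inside both terms of the commutator identity. Doing the latter: $v(x-y)-v(x) = (v(x-y)-\bar v_{B_\ell(x)}) - (v(x)-\bar v_{B_\ell(x)})$, expand the tensor square, and every resulting term is of the form $\fint_{B_\ell(x)}\abs{v-\bar v_{B_\ell(x)}}\cdot(\text{same or bounded})$, which by Cauchy--Schwarz and \cref{R:equivalent bmo} (the $L^2$ John--Nirenberg bound) is $\le C_2\norm{v}_{BMO}^2$. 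I expect this bookkeeping — handling the non-pointwise nature of $v$ and absorbing the mollifier constants — to be the main (though routine) obstacle; there is no deep difficulty, it is the usual CET argument adapted to $BMO\times BMO\times BD$.

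Concretely the steps are: (1) state the commutator identity for $R_\ell$ in terms of increments of $v$ against $\rho_\ell$, being careful to use $v_\ell$ (or $\bar v_{B_\ell}$) for the pointwise term; (2) bound $\abs{R_\ell(x)}$ by an average of $\abs{v(z)-\bar v_{B_\ell(x)}}^2$ over $B_{C\ell}(x)$ via Jensen/Cauchy--Schwarz, where $C$ depends only on $\supp\rho$; (3) take the supremum over $x$ and invoke \cref{R:equivalent bmo} to get $\norm{R_\ell}_{L^\infty}\le C\norm{v}_{BMO}^2$; (4) use symmetry of $R_\ell$ to write $R_\ell:\nabla u_\ell = R_\ell : Eu_\ell$; (5) conclude $\norm{T^\ell_{\rm CET}[v,v,u]}_{L^1}\le \norm{R_\ell}_{L^\infty}\norm{Eu_\ell}_{L^1}\le C\norm{v}_{BMO}^2\,\abs{Eu}(\Omega) \le C\norm{v}_{BMO}^2\,\norm{u}_{BD}$, with all constants independent of $\ell$. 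Care must be taken that $\ell$ is small enough that all the enlarged balls $B_{C\ell}(x)$ stay inside $\Omega$, consistent with the local convention of the paper.
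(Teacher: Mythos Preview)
Your proposal is correct and follows essentially the same route as the paper: symmetry of $R_\ell$ to pass to $Eu_\ell$, H\"older $L^\infty\times L^1$, the commutator identity for $R_\ell$, and John--Nirenberg via \cref{R:equivalent bmo}. The only cosmetic difference is that the paper writes the commutator identity directly with $v_\ell(x)$ in place of $v(x)$, namely $(v\otimes v)_\ell(x)-v_\ell(x)\otimes v_\ell(x)=\int\rho_\ell(x-y)\,(v(y)-v_\ell(x))\otimes(v(y)-v_\ell(x))\,dy$, which makes your ``correction'' term $(v_\ell-v)^{\otimes 2}$ disappear and sidesteps the pointwise issue you flagged; the remaining add-and-subtract of $\fint_{B_\ell(x)}v$ and the appeal to John--Nirenberg are identical.
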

As it is clear from the proof, the above bound does not seem to be available for $\{T^\ell_{\rm DR}\}_{\ell>0}$.
\begin{proof}
    Recall that, since $v_\ell\otimes v_\ell-(v\otimes v)_\ell$ is symmetric, we can write
    $$
T^\ell_{\rm CET}[v,v,u]=(v_\ell\otimes v_\ell-(v\otimes v)_\ell):Eu_\ell.
$$
Since $\norm{Eu_\ell}_{L^1} \leq \norm{u}_{BD}$, we get
$$
\norm{T^\ell_{\rm CET}[v,v,u]}_{L^1}\leq \norm{ v_\ell\otimes v_\ell-(v\otimes v)_\ell}_{L^\infty} \norm{u}_{BD}.
$$
Rewriting 
$$
((v\otimes v)_\ell -v_\ell\otimes v_\ell)(x)=\int_{B_\ell (x)} (v(y)-v_\ell(x))\otimes (v(y)-v_\ell(x))\rho_\ell (x-y)\,dy,
$$
by adding and subtracting $\fint_{B_\ell (x)} v$ we obtain
\begin{align}
\abs{((v\otimes v)_\ell -v_\ell\otimes v_\ell)(x)} &\leq C \fint_{B_\ell(x)} \abs{v(y)-v_\ell (x)}^2 dy\\
&\leq C \left( \fint_{B_\ell(x)} \abs{v(y)-\fint_{B_\ell (x)} v}^2 dy + \abs{\fint_{B_\ell (x)} v(y)\,dy - v_\ell (x)}^2\right)\\
&\leq C  \fint_{B_\ell(x)} \abs{v(y)-\fint_{B_\ell (x)} v}^2 dy \leq C \norm{v}_{BMO}^2,
\end{align}
where the last bound follows by the John--Nirenberg inequality (see \cref{R:equivalent bmo}).
\end{proof}

\begin{remark}
    The meticulous reader may notice that the estimate \eqref{est_bmo_cet} for $\{T^\ell_{\rm CET}\}_{\ell>0}$ is proved only when the first two entries coincide. This is indeed necessary if we want to see only the symmetric part of the gradient of $u$. However, by (tri)linearity this does not affect the argument 
\begin{changemargin}{1cm}{1cm} 
\begin{center}
\textit{uniform bound $+$ vanishing on $C^\infty$ $\Longrightarrow$  vanishing on the closure of $C^\infty$ in the critical norm.}
\end{center}
\end{changemargin}
The same reasoning applies to $(vii)$ in \cref{C: en cons euler general}.
\end{remark}

\begin{remark}\label{S:transport_renorm}
Since the form of the commutator falls in the same category, the argument applies to the \quotes{renormalization property} for the continuity equation as well. The failure of density of $C^\infty_x$ is the substantial difference between the proof by Ambrosio \cite{Ambr04} for $BV_x$ vector fields and that by Diperna--Lions \cite{DipLi89} in $W^{1,1}_x$. In both cases, the density is assumed to be bounded. The idea of Ambrosio is to note that the geometric structure of $BV_x$ vector fields allows to prove that the commutator is weakly vanishing, although it might fail to converge strongly to $0$. For instance, the argument presented in this note recovers the case of $BD_x$ vector fields with density in $C^0_x$. The case of $BV_x$ vector fields with continuous density was addressed in \cite{CL02}.
\end{remark}

\section{Dissipation for co-dimension 1 singular structures}\label{S:cod_1_sing}

In this section we prove \cref{T:main_traces} and \cref{C:main_SBD}.

\begin{proof} [Proof of \cref{T:main_traces}]
To show that $D$ vanishes on countably $\mathcal{H}^d$-rectifiable sets, we have to check that 
\begin{align}
     |D|(\mathcal{N}) &= 0 \qquad \text{if } \mathcal{H}^d(\mathcal{N}) = 0,\label{eq: D<< H^d}\\
      \abs{D}(\Sigma) &= 0  \qquad \text{for any }\Sigma \text{ oriented Lipschitz hypersurface}.\label{eq: D = 0 on hypersurfaces}
\end{align}
Since $D$ is a Radon measure, we have 
$$ V : = \left( u\left( \frac{\abs{u}^2}{2} + p \right), \frac{\abs{u}^2}{2} \right) \in \mathcal{MD}^\infty_{\rm loc}(\Omega \times (0,T)). $$
Since $u,p \in L^\infty_{x,t}$ and $f \in L^1_{x,t}$, by \cite{Sil05}*{Theorem 3.2}    we infer  $D \ll \mathcal{H}^d$, thus proving \eqref{eq: D<< H^d} (see also \cite{DDI24}*{Theorem 3.1} for the case $D\geq0$). From now on, we fix a Lipschitz hypersurface $\Sigma$ oriented by a unit vector $n_\Sigma= (n_x, n_t) \in \R^d \times \R$. Since $f \cdot u\in L^1_{x,t}$, by \cref{P:div and normal trace} we deduce  
\begin{equation}
\abs{D}(\Sigma) =  \abs{- \div_{x,t} V + f \cdot u } (\Sigma)  = \abs{\div_{x,t} V}  (\Sigma) = \int_{\Sigma}   \abs{\tr_n(V, \Sigma_+) - \tr_n(V, \Sigma_-)} \, d \mathcal{H}^{d}, 
\end{equation} 
where $\tr_n(V, \Sigma_{\pm})$ is the inner/outer distributional normal trace of $V$ on $\Sigma$. Then, by \cref{T: distributional trace vs lebesgue trace}, we infer that $\abs{D}(\Sigma)=0$ if and only if
\begin{equation} 
    V^{\Sigma_+}_n(y) = V^{\Sigma_-}_n(y) \qquad \text{for $\mathcal{H}^d$-a.e. } y=(x,t)\in \Sigma. 
\end{equation} 
For simplicity, we denote by $u^{\Sigma_{\pm}} = : u^\pm$ and $ p^{\Sigma_{\pm}} = : p^{\pm}$. By \cref{c: composition trace} we have
\begin{equation}
    V_n^{\Sigma_\pm} =  \left( \frac{\abs{u^\pm }^2}{2} + p^{\pm} \right) u^{\pm } \cdot n_x + \frac{\abs{u^\pm}^2}{2} n_t. \label{eq: trace V} 
\end{equation} 
Therefore, we have to check that 
\begin{equation} \label{eq: trace does not jump}
     \left( \frac{\abs{u^+ }^2}{2} + p^{+} \right) u^{+} \cdot n_x + \frac{\abs{u^+}^2}{2} n_t =  \left( \frac{\abs{u^- }^2}{2} + p^{-} \right)u^{- } \cdot n_x + \frac{\abs{u^-}^2}{2} n_t \qquad \text{for $\mathcal{H}^d$-a.e. } y \in \Sigma. 
\end{equation}
Since $u,p$ solves \eqref{E} and $f\in L^1_{x,t}$, setting $W : = ( u \otimes u + \textrm{Id } p, u ) $, by \cref{P:div and normal trace} it holds 
\begin{equation}
    \int_{\Sigma} \abs{ \tr_n(W, \Sigma_+) - \tr_n(W, \Sigma_-) } \, d \mathcal{H}^{d} = \abs{ \div_{x,t} W } ( \Sigma ) =  \abs{f} (\Sigma) = 0,
\end{equation}
where the last equality follows by $f\in L^1_{x,t}$. Hence, we infer that $\tr_n(W, \Sigma_+) \equiv \tr_n(W, \Sigma_-) $ and, by \cref{c: composition trace} and \cref{T: distributional trace vs lebesgue trace}, we deduce 
\begin{equation} \label{eq: euler trace 1}
    u^+ (u^+ \cdot n_x) + p^+ n_x + u^+ n_t =  W^{\Sigma_+}_n = W^{\Sigma_-}_n =  u^- (u^- \cdot n_x) + p^- n_x + u^- n_t \qquad \text{for $\mathcal{H}^d$-a.e. } y \in \Sigma.
\end{equation}
Similarly, by \cref{c: composition trace} and \cref{T: distributional trace vs lebesgue trace}, since $U : = (u,1)$ is divergence free, we have
\begin{equation}
    u^+ \cdot n_x + n_t = U^{\Sigma_+}_n =  U^{\Sigma_-}_n = u^- \cdot n_x + n_t \qquad \text{for $\mathcal{H}^d$-a.e. } y \in \Sigma, 
\end{equation}
that is 
\begin{equation} \label{eq: euler trace 2}
    u^+ \cdot n_x = u^- \cdot n_x=:u\cdot n_x \qquad \text{for $\mathcal{H}^d$-a.e. } y \in \Sigma.
\end{equation}
Taking the scalar product of \eqref{eq: euler trace 1} with $n_x$ and using \eqref{eq: euler trace 2}, we find 
\begin{equation} \label{eq: euler trace 3}
    p^+ \abs{n_x}^2 = p^- \abs{n_x}^2 \qquad \text{for $\mathcal{H}^d$-a.e. } y \in \Sigma. 
\end{equation}

Let us decompose $\Sigma=\Sigma_1\cup \Sigma_2\cup \Sigma_3$, with 
\begin{align}
    \Sigma_1 &:= \left\{ y=(x,t)\in \Sigma \, :\, n_x(y) \neq 0 \,\text{ and } \, (u\cdot n_x)(y)\neq 0 \right\},\\
     \Sigma_2 &:= \left\{ y=(x,t)\in \Sigma \, :\, n_x(y) \neq 0 \,\text{ and } \, (u\cdot n_x)(y)= 0 \right\},\\
     \Sigma_3 &:= \left\{ y=(x,t)\in \Sigma \, :\, n_x(y) = 0 \right\}.
\end{align}
To conclude, we have to check that \eqref{eq: trace does not jump} holds on every $\Sigma_i$, for $i=1,2,3$.

\underline{\textsc{On $\Sigma_1$}}: Since $n_x\neq 0$, by \eqref{eq: euler trace 3} we deduce $p^+\equiv p^-$ on $\Sigma_1$, and \eqref{eq: euler trace 1} becomes 
$$
\left( u^+ - u^-\right) \big( u\cdot n_x + n_t\big) =0 \qquad \text{on } \Sigma_1.
$$
Thus, either $y\in \Sigma_1$ is such that $u^+(y)=u^-(y)$, or $(u\cdot n_x)(y)=-n_t(y)$. In both cases (recall that here $p^+\equiv p^-$) \eqref{eq: trace does not jump} holds.

\underline{\textsc{On $\Sigma_2$}}: Since $n_x(y)\neq 0$, also in this case \eqref{eq: euler trace 3} implies $p^+\equiv p^-$ on $\Sigma_2$. Then, the fact that $(u\cdot n_x)(y)=0$ reduces \eqref{eq: euler trace 1} to $u^+ n_t=u^- n_t$. Then, either $y\in \Sigma_2$ is such that $u^+(y)=u^-(y)$, or $n_t(y)=0$. It is a direct check that \eqref{eq: trace does not jump} holds in both situations.

\underline{\textsc{On $\Sigma_3$}}: Since $(n_x,n_t)$ is a unit vector, it must be $n_t=\pm 1$. Thus \eqref{eq: euler trace 1} becomes $u^+ =u^-$ on $\Sigma_3$ and \eqref{eq: trace does not jump} holds again.
\end{proof}

The following proposition provides a useful result to deal with bounded solutions whose longitudinal spatial increment scale linearly when measured in $L^1$.

\begin{proposition} \label{P: upper bound DR}
Let $u \in L^\infty_{x,t} $ and $p \in L^1_{x,t}$ be a weak solution to \eqref{E} with $f \in L^1_{x,t}$. Assume that 
$$ (E u)_{ij} :=  \frac{\partial_i u_j + \partial_j u_i}{2} \in \mathcal{M}_{x,t} \qquad \text{ for any } i,j=1,\dots,d. $$ 
Then, the Duchon--Robert distribution $D$ defined by \eqref{local_energy_eq} is a Radon measure and, letting $E^s u$ be the singular part of $E u$ with respect to the space-time Lebesgue measure, locally inside $\Omega\times(0,T)$ there exists a constant $C>0$ such that
\begin{equation}
    \abs{D}\leq C \abs{E^s u} \qquad \text{in } \mathcal M_{x,t}. \label{eq: upper bound DR} 
\end{equation}
\end{proposition}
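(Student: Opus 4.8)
The key is to use the Constantin--E--Titi regularization $D^\ell_{\mathrm{CET}} = R_\ell : \nabla u_\ell = R_\ell : E u_\ell$ (the last equality by symmetry of $R_\ell$), where the mollification is taken in space-time at scale $\ell$. First I would note that since $u \in L^\infty_{x,t}$ is a weak solution with $f \in L^1_{x,t}$, the distributional identity \eqref{local_energy_eq} holds and $D = \lim_{\ell \to 0} D^\ell_{\mathrm{CET}}$ in $\mathcal{D}'_{x,t}$. The plan is to show that $D^\ell_{\mathrm{CET}}$, viewed as a space-time measure, is bounded in total variation uniformly in $\ell$ by a fixed constant times $\abs{E u}$ evaluated on a slightly enlarged set, and moreover that its absolutely continuous part washes out in the limit, leaving only mass on the singular part $E^s u$.

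\textbf{Key steps.} Fix a compact set $K \subset \joinrel \subset \Omega \times (0,T)$ and work at scales $\ell$ small relative to $\dist(K, \partial(\Omega\times(0,T)))$. Step one: the pointwise commutator bound $\norm{R_\ell}_{L^\infty(K)} \leq C \norm{u}_{L^\infty}^2$, which is immediate from $R_\ell = u_\ell \otimes u_\ell - (u \otimes u)_\ell$ and boundedness of $u$; actually more is needed, so I would instead use the sharper representation $R_\ell(x,t) = \int (\delta_{\ell z} u)(x,t) \otimes (\delta_{\ell z} u)(x,t)\, \rho(z)\, dz - (\text{mean-subtraction terms})$ to get $\norm{R_\ell}_{L^\infty(K)} \leq C \norm{u}_{L^\infty}^2$ with the understanding that $R_\ell \to 0$ pointwise a.e. where $u$ is approximately continuous. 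Step two: since $E u \in \mathcal{M}_{x,t}$, write $E u = E^a u + E^s u$ with $E^a u = \mathcal{A}(x,t)\, \mathcal{L}^{d+1}$, $\mathcal{A} \in L^1_{x,t}$, and $E u_\ell = (E u)_\ell = (E^a u)_\ell + (E^s u)_\ell$. Then for any $\varphi \in C^0_c(K)$, $\varphi \geq 0$,
\begin{align}
\abs{\langle D^\ell_{\mathrm{CET}}, \varphi\rangle} &\leq \int \abs{R_\ell : (E^a u)_\ell}\, \varphi \, dx\, dt + \int \abs{R_\ell : (E^s u)_\ell}\, \varphi \, dx\, dt \nonumber \\
&\leq C \norm{u}_{L^\infty}^2 \Big( \int \abs{(E^a u)_\ell}\, \varphi \, dx\, dt + \int \abs{(E^s u)_\ell}\, \varphi\, dx\, dt \Big).
\end{align}
The first term is bounded by $C \norm{u}_{L^\infty}^2 \int \abs{\mathcal{A}}\, \varphi_\ell \to C \norm{u}_{L^\infty}^2 \int \abs{\mathcal{A}}\, \varphi$, but this alone does not give \eqref{eq: upper bound DR}. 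Step three (the crucial refinement): on the support where $E^a u$ lives, i.e. at $\mathcal{L}^{d+1}$-a.e. point, $u$ is approximately continuous, so $R_\ell \to 0$ a.e.; combined with the uniform $L^\infty$ bound on $R_\ell$ and $(E^a u)_\ell \rightharpoonup E^a u$, a dominated-convergence argument gives $\int \abs{R_\ell : (E^a u)_\ell}\, \varphi \to 0$. Hence only the singular term survives: $\limsup_{\ell\to 0} \abs{\langle D^\ell_{\mathrm{CET}}, \varphi\rangle} \leq C \norm{u}_{L^\infty}^2 \limsup_{\ell\to 0} \int \abs{(E^s u)_\ell}\, \varphi \leq C\norm{u}_{L^\infty}^2 \int \varphi\, d\abs{E^s u}$, using lower semicontinuity / the standard fact that $\abs{(E^s u)_\ell} \mathcal{L}^{d+1} \rightharpoonup \abs{E^s u}$ weakly-$*$ when convolved against a fixed continuous function, up to the $\ell$-enlargement of supports which disappears in the limit. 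Since $D^\ell_{\mathrm{CET}} \to D$ in $\mathcal{D}'$, passing to the limit gives $\abs{\langle D, \varphi\rangle} \leq C \int \varphi\, d\abs{E^s u}$ for all such $\varphi$, which is exactly $\abs{D} \leq C \abs{E^s u}$ locally; in particular $D$ is a (locally finite) Radon measure.

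\textbf{Main obstacle.} The delicate point is Step three: proving $R_\ell \to 0$ strongly enough against the absolutely continuous measure $\abs{\mathcal{A}}\, \mathcal{L}^{d+1}$ while only controlling $R_\ell$ in $L^\infty$. Pointwise a.e. convergence $R_\ell(x,t) \to 0$ at Lebesgue points of $u$ (equivalently, at approximate-continuity points) is standard, but one must be careful that this is genuinely $\mathcal{L}^{d+1}$-a.e.\ and then invoke dominated convergence with dominating function $C\norm{u}_{L^\infty}^2 \abs{\mathcal{A}}\, \varphi \in L^1$. The second subtlety is handling the mollification on the singular part: one wants $\limsup_\ell \int \abs{(E^s u)_\ell}\varphi\, dx\, dt \leq \int \varphi\, d\abs{E^s u}$, which follows because $\abs{(E^s u)_\ell} \leq (\abs{E^s u})_\ell$ pointwise (Jensen) and $(\abs{E^s u})_\ell\, \mathcal{L}^{d+1} \rightharpoonup \abs{E^s u}$ weakly-$*$; the support enlargement by $\ell$ is absorbed since $\abs{E^s u}$ is a finite measure and $\varphi$ has compact support in the interior. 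Everything else — the commutator estimate, the identification of $D$ as the CET limit, and the passage to the distributional limit — is routine.
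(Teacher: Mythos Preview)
Your approach is essentially the same as the paper's: use the CET approximation $R_\ell : E u_\ell$, split $Eu = E^a u + E^s u$, kill the absolutely continuous contribution via $R_\ell \to 0$ a.e., and control the singular contribution by weak-$*$ convergence of $(E^s u)_\ell$. Two small points deserve mention. First, the paper mollifies only in space (this is how $D^\ell_{\rm CET}$ is defined in \eqref{approx_of_D_cet}); your space-time mollification works too, but you would need to rederive that it still approximates $D$. Second, and more substantively, your stated dominating function $C\norm{u}_{L^\infty}^2 \abs{\mathcal A}\,\varphi$ does \emph{not} dominate $\abs{R_\ell : (E^a u)_\ell}\,\varphi$, since $\abs{(E^a u)_\ell}=\abs{\mathcal A_\ell}$ need not be pointwise bounded by $\abs{\mathcal A}$; the paper handles this by the standard add--subtract trick, writing $(E^a u)_\ell = \big((E^a u)_\ell - E^a u\big) + E^a u$, using strong $L^1$-convergence of mollifications on the first piece (paired with $\abs{R_\ell}\le C$), and only then applying dominated convergence to $\abs{R_\ell : E^a u}\le C\abs{E^a u}\in L^1$ on the second.
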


\begin{proof}
Recall that $D=\lim_{\ell\rightarrow 0} R_\ell :\nabla u_\ell$ in the sense of distributions and, since $R_\ell$ is a symmetric matrix, it holds that $R_\ell :\nabla u_\ell=R_\ell :E u_\ell$. Although we are mollifying only in space, it is  a direct  check that $E u_\ell = (E u)_\ell$ in $\mathcal{D}'_{x,t}$ and, since $u\in L^1_{x,t}$, then $(E u)_\ell\in L^1_{x,t}$. Therefore, we infer that $\sup_{\ell>0} \norm{E u_\ell}_{L^1_{x,t}}<\infty$. Together with the assumption $u\in L^\infty_{x,t}$, this yields to $\sup_{\ell>0} \norm{R_\ell:E u_\ell}_{L^1_{x,t}}<\infty$. In particular $D$ is a Radon measure. By the Radon--Nikodym decomposition we write $E u =E^a u + E^s u$ with $E^a u \in L^1_{x,t}$ and $E^s u$ singular with respect to $\mathcal{L}^{d+1}$. Then, given $\varphi \in C^\infty_{x,t}$ with compact support, we split\footnote{Since $(E u)_\ell, (E^a u)_\ell \in L^1_{x,t}$, also $(E^s u)_\ell \in L^1_{x,t}$.}
\begin{align}
    \abs{ \int \varphi\, dD}&=\lim_{\ell \rightarrow 0} \abs{ \int \varphi R_\ell : (E u)_\ell\, dx\,dt} \\
    &\leq \limsup_{\ell\rightarrow 0} \int \abs{\varphi} \abs{R_\ell: (E u)_\ell} \,dx\,dt \\
    & \leq \limsup_{\ell\rightarrow 0}\left(\int \abs{\varphi} \abs{R_\ell:(E^a u)_\ell} \,dx\,dt + \int \abs{ \varphi} \abs{R_\ell:(E^s u)_\ell} \,dx\,dt\right).\label{split_ac_sing}
\end{align}
We bound the first term by 
$$ \int \abs{\varphi} \abs{R_\ell: (E^a u)_\ell} \,dx\,dt \leq \int \abs{\varphi} \abs{R_\ell: ( (E^a u)_\ell-E^a u) }\,dx\,dt + \int \abs{\varphi} \abs{R_\ell:E^a u} \,dx\,dt=:I_\ell + II_\ell. 
$$
Since $E^a u\in L^1_{x,t}$, by the standard property of the convolution, we have that $(E^a u)_\ell (\cdot, t) \to (E^a u) (\cdot, t)$ in $L^1_x$ for a.e. $t$ and, locally in space and time,
$$\sup_{\ell >0} \norm{(E^a u)_\ell(\cdot, t)}_{L^1_x} \leq \norm{(E^a u)(\cdot, t)}_{L^1_x} \in L^1_t. $$
Therefore, recalling that $u \in L^\infty_{x,t}$, by the dominated convergence theorem we get that $I_\ell \to 0$ as $\ell \to 0$. To estimate $II_\ell$, we recall that $\sup_{\ell>0} \abs{R_\ell:E^a u}\leq C\abs{E^a u}\in L^1_{x,t}$ and $\abs{R_\ell}\rightarrow 0$ almost everywhere. Thus, again by the dominated convergence theorem, we deduce $II_\ell\rightarrow 0$ as $\ell\rightarrow 0$. Therefore, \eqref{split_ac_sing} reduces to 
$$ \abs{\int \varphi\, dD} \leq \limsup_{\ell\rightarrow 0}\int \abs{\varphi} \abs{R_\ell:(E^s u)_\ell}\,dx\,dt\leq C \norm{\phi}_{C^0_{x,t}} \limsup_{\ell\rightarrow 0}\int_{\spt \phi }  \abs{(E^s u)_\ell} \,dx\,dt. 
$$
Although we are mollifying only in space, it holds $(E^s u)_\ell \rightharpoonup^* E^s u$ in $\mathcal{D}'_{x,t}$. Therefore 
\begin{equation}
    \abs{\int \phi \, d D} \leq C \norm{\phi}_{C^0_{x,t}} \abs{E^s u}(\spt \phi ) \qquad \forall \phi \in C^0_c(\Omega \times (0,T)), 
\end{equation}
thus proving \eqref{eq: upper bound DR}. 
\end{proof}

We are in the position to prove \cref{C:main_SBD}.

\begin{proof} [Proof of \cref{C:main_SBD}]
To avoid confusion, we will use the notation $E_{x,t}$ when the symmetric gradient is computed in the space-time variables, while $E$ for the spatial variable only. Since $U:=(u,1)\in SBD_{x,t}$ (see \cref{d: SBD}), we have\footnote{Here we adopt the usual convention that $u$ is a column vector.}
$$
E_{x,t} U =\begin{pmatrix}
        E u  & \frac{\partial_t u}{2}
        \\ \frac{\partial_t u^T}{2}  & 0
    \end{pmatrix} \in \mathcal{M}_{x,t}.
$$
Therefore, by \cref{P: upper bound DR} we deduce that $D$ is a Radon measure and, since $\abs{E^s u}\leq \abs{ E^s_{x,t} U}$ in the sense of measures\footnote{$E^s_{x,t} U$ is the singular part of $E_{x,t} U$ with respect to $\mathcal L^{d+1}$.}, by \eqref{eq: upper bound DR} we infer that 
$$\abs{D} \leq C \abs{E^s_{x,t} U} $$
holds locally inside $\Omega\times (0,T)$.
Since $U\in SBD_{x,t}$, by \cref{thm structure of the gradient} we find a countably $\mathcal H^d$-rectifiable set $\Sigma\subset \Omega\times (0,T)$ such that $\abs{E^s_{x,t} U} =\abs{E^s_{x,t} U}\llcorner \Sigma$. Then $\abs{D}=\abs{D}\llcorner \Sigma$ necessarily. By \cref{T:trace_in_BD} the vector field $U$, and thus $u$ too, has bilateral traces on oriented Lipschitz hypersurfaces. Then, \cref{T:main_traces} implies $\abs{D}(\Sigma)=0$ and, since $\abs{D}$ is concentrated on $\Sigma$, the proof is concluded. 
\end{proof}

Although some steps in the previous proof would still hold under the assumption $u\in L^1_tSBD_x$ in place of $u\in SBD_{x,t}$, without the time regularity assumption it does not seem to be possible anymore to deduce $D\equiv 0$ (see \cref{R:sbd_time} below). Note that strictly speaking, none of the two assumptions is stronger than the other. Indeed,  $u\in L^1_tSBD_x$ implies that the time marginal of the measure $E u$ is absolutely continuous with respect to Lebesgue, a property that might fail to be true if $u\in SBD_{x,t}$. For these reasons, we also state the following.

\begin{corollary} \label{P:SBD cons strange}
Let $u \in L^\infty_{x,t}\cap L^1_t BD_{x}$ and $p \in L^1_{x,t}$ be a weak solution to \eqref{E} with  $f \in L^1_{x,t}$. Then, the Duchon--Robert distribution $D$ defined by \eqref{local_energy_eq} is a Radon measure and there exists a constant $C>0$ such that
$$ \abs{D}\leq C \abs{E^s u_t}\otimes dt \qquad \text{in } \mathcal M_{x,t}. $$
\end{corollary}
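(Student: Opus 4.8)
The plan is to reduce the statement to \cref{P: upper bound DR} by observing that, for the purposes of that proposition, the hypothesis $u\in L^1_tBD_x$ is simply the requirement $(Eu)_{ij}\in\mathcal{M}_{x,t}$ enriched with the extra information that the time marginal of $Eu$ is absolutely continuous. Indeed, for a.e.\ $t$ the slice $u(\cdot,t)$ lies in $BD(\Omega)$, so that $Eu_t:=Eu(\cdot,t)\in\mathcal{M}_x$, and the very definition of $L^1_tBD_x$ gives $t\mapsto\abs{Eu_t}(\Omega)\in L^1_t$ (locally); hence $\mu:=\{Eu_t\}_{t}\in L^1(I;\mathcal{M}_x)$ in the sense of \cref{d: curve of measures}, and by \cref{l: curve of measure 1} the disintegration $Eu_t\otimes dt$ is a well-defined finite Borel measure on $\Omega\times(0,T)$ whose time marginal is absolutely continuous.

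First I would verify that this space-time measure coincides, entrywise, with the distributional spatial symmetric gradient of $u$. Testing against $\varphi\in C^\infty_c(\Omega\times(0,T))$ and using Fubini (recall that $(Eu)_{ij}$ involves only the spatial derivatives $\partial_i,\partial_j$),
$$\langle (Eu)_{ij},\varphi\rangle=-\frac{1}{2}\int_0^T\!\!\int_\Omega\big(\partial_i\varphi\,u_j+\partial_j\varphi\,u_i\big)\,dx\,dt=\int_0^T\langle (Eu_t)_{ij},\varphi(\cdot,t)\rangle\,dt=\langle (Eu_t)_{ij}\otimes dt,\varphi\rangle,$$
so that $(Eu)_{ij}\in\mathcal{M}_{x,t}$ for all $i,j$. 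Consequently \cref{P: upper bound DR} applies as it stands: the Duchon--Robert distribution $D$ is a Radon measure and, locally inside $\Omega\times(0,T)$, there is $C>0$ with $\abs{D}\le C\abs{E^su}$, where $E^su$ denotes the singular part of the space-time measure $Eu$ with respect to $\mathcal{L}^{d+1}$.

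It then remains to identify $E^su$ with $E^su_t\otimes dt$. This is precisely \cref{l: curve of measure 2} applied to the curve $\mu=\{Eu_t\}_{t}$: writing $Eu_t=E^au_t+E^su_t$ for the Radon--Nikodym decomposition with respect to $\mathcal{L}^d$, the lemma yields $E^su=E^su_t\otimes dt$ and, applying its first assertion to $\{E^su_t\}_t$, also $\abs{E^su}=\abs{E^su_t}\otimes dt$. Plugging this into the bound from \cref{P: upper bound DR} gives $\abs{D}\le C\abs{E^su_t}\otimes dt$ in $\mathcal{M}_{x,t}$, which is the claim.

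I do not expect a genuine obstacle: the only points demanding (mild) care are the measurability and disintegration bookkeeping, which are already packaged into \cref{l: curve of measure 1} and \cref{l: curve of measure 2}, and the elementary check that the slicewise spatial symmetric gradients assemble to the full space-time distributional one. What is worth flagging — and what this corollary is meant to emphasize — is that $u\in L^1_tBD_x$ is not comparable with $u\in SBD_{x,t}$, precisely because the former forces $(\pi_t)_{\#}\abs{Eu}\ll\mathcal{L}^1$; this absolute continuity of the time marginal is exactly what makes the $\otimes dt$ structure in the bound available, but it plays no further role in the argument.
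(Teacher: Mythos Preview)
Your proposal is correct and follows essentially the same route as the paper: identify $Eu$ with the disintegration $Eu_t\otimes dt$ via \cref{l: curve of measure 1}, apply \cref{P: upper bound DR} to obtain $\abs{D}\le C\abs{E^su}$, and then invoke \cref{l: curve of measure 2} to rewrite $\abs{E^su}=\abs{E^su_t}\otimes dt$. Your write-up is in fact slightly more explicit than the paper's (e.g.\ the Fubini check that $(Eu)_{ij}=(Eu_t)_{ij}\otimes dt$), but the argument is the same.
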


In particular, if in the above corollary we would additionally require that  $u$ and $p$ have bilateral traces on Lipschitz space-time hypersurfaces in the sense of \cref{d: inner/outer trace} and $\abs{E^s_x u_t}\otimes dt $ is concentrated on a countably $\mathcal{H}^d$-rectifiable set, we would again deduce $D\equiv 0$. However, $u\in L^1_tSBD_x$ does not seem to be sufficient to guarantee the latter property (see \cref{R:sbd_time}). The proof of \cref{P:SBD cons strange} is similar  to that of \cref{C:main_SBD}. Since the time variable is treated in a different way, we prove it for completeness.

\begin{proof}
Let $E u_t$ be the symmetric gradient of $u(\cdot, t)$. Since $u \in L^1_{x,t}$, we have that $\{ E u_t\}_{t \in (0,T)}$ is a curve of measures in the sense of \cref{d: curve of measures} and, by \cref{l: curve of measure 1} it can be identified with $E u = E u_t \otimes dt \in \mathcal{M}_{x,t}$. Here the notation is consistent with that of \cref{P: upper bound DR}, since it can be directly checked that $E u_t \otimes dt$ is the distributional symmetric gradient of $u$ with respect to the spatial variables. Then, the conclusion follows by \cref{P: upper bound DR} and \cref{l: curve of measure 2}, since 
$$\abs{D} \leq C \abs{E^s u} = C \abs{E^s u_t} \otimes dt. $$
\end{proof}

\begin{remark} \label{R:sbd_time}
Let us comment on the $SBD$ space-time assumption in \cref{C:main_SBD}. Assume we replace that assumption with $u \in L^1_t SBD_x$. In this case, \cref{P:SBD cons strange} implies that $\abs{E^s u_t}$, and thus $D$ too, is concentrated on a countably $\mathcal H^{d-1}$-rectifiable set, for a.e. time $t$. However, the lack of any control on the dynamics of these sets does not allow us to deduce that the resulting space-time concentration set of $D$ is countably $\mathcal H^d$-rectifiable. In particular, the traces argument from \cref{T:main_traces} cannot be applied anymore, even with the additional requirement that $u$ has bilateral traces on Lipschitz oriented hypersurfaces in space-time. As we have seen in the proof of \cref{C:main_SBD}, the $SBD_{x,t}$ assumption is a way to prevent the wild time evolution of the support of the dissipation.
\end{remark}

\begin{remark}
    It is not clear to the authors if $u\in SBD_{x,t}$ could be deduced from $u\in L^1_t SBD_x\cap L^\infty_{x,t}$ by using the PDE. Although it is possible to rewrite it as 
    $$
    \partial_t u_j = - u_i \left( \partial_i u_j +\partial_j u_i \right) - \partial_j \left( p - \frac{|u|^2}{2}\right)\qquad \forall j=1,\dots,d,
    $$
    it seems unlikely that $\partial_j\left(p - \frac{|u|^2}{2}\right)$ can be proved to be a locally finite measure in space-time.
\end{remark}

\bibliographystyle{plain} 
\bibliography{biblio}

\end{document}